\algrenewcommand{\algorithmiccomment}[1]{\hspace{-19pt} {\normalsize\color{black}$\triangleright\;$\textit{#1}}}
\theoremstyle:=definition,remark,plain\do{%
     \expandafter\g@addto@macro\csname th@\theoremstyle\endcsname{%
        \addtolength\thm@preskip\parskip
     }%
   }
\newtheoremstyle{indented}{5pt}{5pt}{\itshape}{2.5em}{\bfseries}{.}{.5em}{}
\theoremstyle{plain}
\newtheorem{theorem}{Theorem}[section]
\newtheorem{lemma}[theorem]{Lemma}
\newtheorem{proposition}[theorem]{Proposition}
\newtheorem{corollary}[theorem]{Corollary}
\newtheorem{definition}[theorem]{Definition}
\newtheorem{question}{Question}
\newtheorem{claim}{Claim}[section]
\theoremstyle{definition}
\newtheorem{example}[theorem]{Example}
\newtheorem{remark}[theorem]{Remark}
\newtheorem{alg}{Algorithm}
\theoremstyle{indented}
\newtheorem*{claim*}{\indent Claim}
\newcommand{\exampleend}{ \hfill$\spadesuit$}
\renewcommand{\a}{\alpha}
\renewcommand{\a}{\alpha}
\newcommand{\se}{\subseteq}
\newcommand{\len}{{\rm len}}
\newcommand{\reg}{{\rm reg}}
\newcommand{\Res}{{\rm Res}}
\newcommand{\tn}{\textnormal}
\newcommand{\Ann}{\tn{Ann}}
\newcommand{\N}{\mathbb{N}}
\newcommand{\Pn}{\bbP^n}
\newcommand{\codim}{\textnormal{codim}}
\newcommand{\calD}{\mathcal{D}}
\newcommand{\calR}{\mathcal{R}}
\newcommand{\calRa}{\underline{\mathcal{R}}}
\newcommand{\calS}{\mathcal{S}}
\newcommand{\calSa}{\underline{\mathcal{S}}}
\newcommand{\bbN}{\mathbb{N}}
\newcommand{\bbP}{\mathbb{P}}
\newcommand{\HF}{\mathrm{HF}}
\newcommand{\Sym}{\mathrm{Sym}}
\newcommand{\contract}{\mathbin{\mathpalette\dointprod\relax}}
\newcommand{\dointprod}[2]{%
  \raisebox{\depth}{\scalebox{1}[-1]{$#1\lnot$}}}
\newcommand{\todo}[1]{\marginnote{{\tiny\color{red}#1}}}
\title{On schemes evinced by generalized additive decompositions and their regularity}
\author{Alessandra Bernardi, Alessandro Oneto, Daniele Taufer*}
\address[A. Bernardi, A. Oneto]{Universit\`a di Trento, Via Sommarive, 14 - 38123 Povo (Trento), Italy}
\email{alessandra.bernardi@unitn.it, alessandro.oneto@unitn.it}
\address[D. Taufer]{KU Leuven - Celestijnenlaan 200A, B-3001 Leuven (Belgium)}
\email{daniele.taufer@kuleuven.be (*corresponding author)}
\subjclass[2020]{13B25, 13D40, 14N07}
\keywords{Generalized additive decompositions, natural apolar schemes, regularity, symmetric tensor rank}
\begin{document}
\newenvironment{AB}{\color{RubineRed}}{\color{black}}
\newenvironment{DT}{\color{olive}}{\color{black}}

%% ------------------------------------------------------------------

\begin{abstract}
    We define and explicitly construct schemes evinced by generalized additive decompositions (GADs) of a given $d$-homogeneous polynomial $F$.
    We employ GADs to investigate the regularity of $0$-dimensional schemes apolar to $F$, focusing on those satisfying some minimality conditions.
    We show that irredundant schemes to $F$ need not be $d$-regular, unless they are evinced by special GADs of $F$.
    Instead, we prove that tangential decompositions of minimal length are always $d$-regular, as well as irredundant apolar schemes of length at most $2d+1$.
\end{abstract}

\maketitle

\section{Introduction}

%% ------------------------------------------------------------------

Algebraic and geometric properties of $0$-dimensional schemes have been largely studied from several perspectives in algebraic geometry, commutative algebra, and computational algebra. Through \emph{apolarity theory}, these studies find applications in the study of \emph{additive decompositions} of homogeneous polynomials and, more in general, \emph{tensor decompositions} \cite{landsberg2012tensors,bernardi2018hitchhiker,BC:book}.

In this paper, we are interested in $0$-dimensional schemes that are \emph{apolar} to a given $d$-homogeneous polynomial $F$, namely the $0$-dimensional schemes defined by ideals annihilating $F$ by derivation.
Understanding the possible Hilbert functions of {\it minimal} apolar schemes 
%is a deep and open problem 
is a deep and largely open question, which could give useful information on the nature of additive decompositions of polynomials and {\it secant varieties}, and whose grasp is challenging even in moderately small cases \cite{RS00,iliev2001,curvilinear,elias2012isomorphism,carlini2012solution,landsberg2013equations,buczynska2013waring,BB14:Secant,CJN,jelisiejew2018vsps,BJMK18:Polynomials,Chiantini:quantum,MO,buczynska2021apolarity,angelini2023waring}.%, and is known only for special instances \cite{buczynska2013waring}.

Our work aims to study when these Hilbert functions stabilize, and more specifically at discerning essential conditions for a given $d$-homogeneous polynomial to have \emph{minimal} $0$-dimensional apolar schemes that are regular in degree $d$.
This subtle problem carries far-reaching implications spanning the domains of classical algebraic geometry and complexity theory. In the context of algebraic geometry, these concepts are part of a longstanding tradition of exploring secant varieties and Waring problems, see \cite{bernardi2018hitchhiker} for a general overview.
From a complexity theory perspective, the knowledge of the regularity of minimal apolar schemes to a given polynomial might improve the efficiency of symbolic algorithms for computing ranks and minimal decomposition of polynomials \cite{BRACHAT20101851,BT20:Waring,BDHM}. 

\subsection{Additive decompositions}

As already recalled, the study of apolar schemes is related to notions of \emph{rank} and \emph{additive decompositions} associated with homogeneous polynomials.
The minimal length of a $0$-dimensional scheme apolar to $F$ is the \emph{cactus rank} of $F$ \cite{RS:11,BB14:Secant}. 
%This is not directly linked to a particular additive decomposition but, if we restrict to particular schemes, then we obtain interesting decompositions. For example,
If we restrict to schemes that are locally contained in $(d+1)$-fat points, then they correspond to \emph{generalized additive decompositions} (GADs) of $F$, namely expressions as
\[F = \sum_{i=1}^r L_i^{d-k_i}G_i,\]
where the $L_i$'s are pairwise non-proportional linear forms not dividing the corresponding $G_i$'s \cite{IaKa:book,BBM14:comparison,BT20:Waring}.
Special cases of such decompositions include \emph{tangential decompositions}, when $k_i = 1$ \cite{BT20:Waring,CGG, ballico:tg}, and \emph{Waring decompositions}, when $k_i = 0$ \cite{Ger96,bernardi2018hitchhiker}.

This algebraic description of ranks and additive decompositions has a geometric interpretation in terms of \emph{Veronese varieties} and their \emph{secant varieties} \cite{Zak:tgsec, A:joins, bernardi2018hitchhiker}.
A Waring decomposition corresponds to a set of points on the Veronese variety whose linear span contains the projective point corresponding to the polynomial $F$.
Analogously, tangential decompositions (generalized additive decompositions, respectively) correspond to a set of points on the tangential variety (osculating varieties, respectively) of the Veronese variety whose linear span contains the projective point of $F$ \cite{bernardi2018hitchhiker, CGG, BCGI07:Osculating, Bernardi2009982, BF}.
In this view, GADs parameterize generic points of a \emph{joint variety} of osculating varieties to a certain Veronese variety.

\subsection{Content of the paper and main results} 

After recalling the standard definition and results in \Cref{sec:Preliminaries}, we define and provide an explicit construction of schemes evinced by GADs in \Cref{sec:GAD}.
This construction locally agrees with the natural apolar schemes defined in \cite{BJMK18:Polynomials}, but is made effective by delving into the computational details.
An implementation of this construction routine in Macaulay2 \cite{M2} and Magma \cite{MR1484478} can be found in \cite{Repo}.

In \Cref{sec:irred} we investigate the weaker and more geometric irredundancy condition, i.e. we look at schemes that are minimal by inclusion among the apolar schemes to a given form $F$ of degree $d$.
With \Cref{ex:irredbutnotmin} we observe that schemes evinced by GADs might well be redundant, whereas we prove in \Cref{prop:ContainsGAD} that irredundant schemes are evinced by a GAD of $F$ precisely when their connected components are contained in $(d+1)$-fat points.
Therefore, all schemes apolar to $F$ with \emph{short} components are evinced by certain families of GADs of $F$.
However, \Cref{ex:GAD_nonContained} shows that schemes with \emph{long} components may only arise from GADs of higher degree polynomials.

In \Cref{sec:regularity} we tackle the regularity of minimal apolar schemes.
We show that non-redundancy to a degree-$d$ form is not enough to ensure $d$-regularity.
Indeed, in \Cref{irregular:irredundant,ex:perazzo} we present degree-$d$ homogenous polynomials admitting an apolar scheme that is irredundant but not $d$-regular.
However, we notice that in both cases such schemes are not minimal by length.

In \Cref{prop:externalconditions} we show that the addenda constituting a GAD evincing an irredundant scheme $Z$ may never appear in its inverse systems.
We use this result in \Cref{prop:lowmultiplicity} to guarantee $d$-regularity for schemes evinced by GADs such that the $L_i$'s are linearly independent and the $k_i$'s are small enough, regardless of the scheme being minimal.
However, we point out in \Cref{rmk:sharp} that all the assumptions of \Cref{prop:lowmultiplicity} are sharp.

Drawing from the intuition that schemes with components of low multiplicity usually exhibit low regularity, in \Cref{prop:2jets} we prove that minimal tangential decompositions of degree-$d$ forms always evince $d$-regular schemes.
\Cref{ex:perazzo} shows the condition of having minimal length is essential, while irredundancy is not enough.
This answers an open question from \cite[Remark 5.4]{BT20:Waring}.

Finally, we show in \Cref{prop:shortschemes} that if the cactus rank of a degree-$d$ form is not greater than $2d+1$, then non-redundancy is actually enough to guarantee $d$-regularity. 
In particular, all the schemes of minimal length apolar to degree-$d$ forms with length smaller or equal to $2d+1$ are $d$-regular.

\subsection*{Acknowledgements} We sincerely thank E. Ballico, W. Buczy\'nska, J. Buczy\'nski, M.V. Catalisano, C. Ciliberto, J. Jelisiejew and B.~Mourrain for fruitful conversations. 
We also thank the anonymous reviewer for the careful proofreading.
DT acknowledges the hospitality of the TensorDec Laboratory during a research stay at the Department of Mathematics at the University of Trento, where part of the present work has been conducted.

\subsection*{Funding}
DT has been supported by the European Union's H2020 Programme ERC-669891, and by the Research Foundation - Flanders via the FWO postdoctoral fellowship 12ZZC23N and the travel grant V425623N. 
AB has been partially supported by GNSAGA of INDAM. AB and AO have been funded by the European Union under NextGenerationEU. PRIN 2022 Prot. n. 2022ZRRL4C$\_$004 and Prot. n. 20223B5S8L, respectively. Views and opinions expressed are however those of the author(s) only and do not necessarily reflect those of the European Union or European Commission.  Neither the European Union nor the granting authority can be held responsible for them. %\\ ~ \\ 
\begin{center}\includegraphics[height=1.95cm]{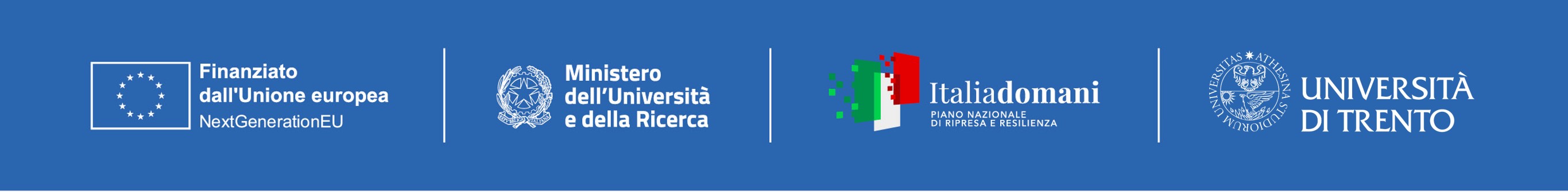}\end{center}

All the authors have been partially supported by the Thematic Research Programme ``Tensors: geometry, complexity and quantum entanglement", University of Warsaw, Excellence Initiative – Research University and the Simons Foundation Award No. 663281.

%% ------------------------------------------------------------------

\section{Preliminaries}\label{sec:Preliminaries}

%% ------------------------------------------------------------------

In this paper, $\Bbbk$ will always be an algebraically closed field of characteristic $0$. 
Given $\alpha = (\alpha_0,\ldots,\alpha_n)$ and $\beta = (\beta_0,\ldots,\beta_n)$ in $\bbN^{n+1}$, let $|\alpha| = \sum_{i=0}^n \alpha_i$ and $\alpha! = \prod_{i=0}^n \alpha_i!$. We write $\alpha \succeq \beta$ if $\alpha_i \geq \beta_i$ for every $0 \leq i \leq n$. We use the standard short notation $X^\alpha = X_0^{\alpha_0}\cdots X_n^{\alpha_n}$.

\subsection{Apolarity} \label{sec:apolarity}
Let $\calS = \Bbbk[X_0,\ldots,X_n] = \bigoplus_{d \in \N} \calS_d$ and $\calR = \Bbbk[Y_0,\ldots,Y_n] = \bigoplus_{d \in \N} \calR_d$ be standard graded polynomial rings, where $\calS_d$ and $\calR_d$ denote the $\Bbbk$-vector spaces of degree-$d$ homogeneous polynomials. 
We also write $\calS_{\leq d} = \bigoplus_{e \leq d} \calS_e$ and $\calR_{\leq d} = \bigoplus_{e \leq d} \calR_e$.

We consider the apolarity action of $\calR$ on $\calS$ given by differentiation, i.e.,
\begin{equation}\label{action:diff}
Y^\beta \circ X^{\alpha} = 
	\begin{cases}
		\partial_\beta(X^\alpha) = \frac{\alpha!}{(\alpha-\beta)!}X^{\alpha-\beta} & \text{ if } \alpha \succeq \beta, \\
		0 & \text{ otherwise},
	\end{cases}
\end{equation}
extended by $\Bbbk$-linearity. Given $F \in \calS$, we consider its annihilator 
    \[
        \Ann(F) = \{G \in \calR ~:~ G \circ F = 0\}, %\todo{to add: inverse systems, dove li usiamo (if)}
    \]
which is an ideal of $\calR$.

This action defines a non-degenerate perfect pairing $\calR_d \times \calS_d \to \Bbbk$ for every $d \in \bbN$.
Given a subspace $V \se \calS_d$, we denote by $V^\perp \se \calR_d$ its orthogonal space with respect to such pairing.
If $V = \langle F \rangle$, we simply denote its orthogonal space by $F^{\perp}$. %\begin{AB} Scritto così è un sottospazio di $\calS_d$ e non di tutto $\cal S$. E' quello che vogliamo?\end{AB} D/ Si.

% \begin{definition}\label{def:apolarideal_diff}
%     Let $F \in \calS$. The \textbf{apolar ideal} of $F$ is 
%     \todo{to add: inverse systems, se li usiamo veramente}
%     \[
%         \Ann(F) = \{G \in \calR ~:~ G \circ F = 0\}.
%     \]
% \end{definition}
\begin{remark}
     A classical result by Macaulay \cite{macaulay1916algebraic} shows that graded Artinian Gorenstein algebras are all, and only, quotient rings of polynomial rings by annihilator ideals of homogeneous polynomials, see \cite[Theorem 8.7]{Ger96}, \cite[Lemma 2.12]{IaKa:book} or \cite[Theorem 21.6]{eisenbud2013commutative}.
\end{remark}
In the following, we always identify $\calR$ with the coordinate ring of $\Pn = \bbP(\calS_1)$. 

\begin{definition} \label{def:apolar_scheme}
    Let $F \in \calS_d$. A $0$-dimensional scheme $Z \subset \Pn$ is said to be \textbf{apolar} to $F$ if $I(Z) \subseteq \Ann(F)$.
\end{definition}

A famous characterization of schemes apolar to a given form is provided by the well-known {\it Apolarity Lemma}, see e.g. \cite[Lemma 1.15]{IaKa:book} in the classical case of reduced schemes, \cite[Lemma 1]{BJMK18:Polynomials} for non-reduced scheme or \cite[Lemma 1.3]{ranestad2018varieties} into a more general framework.
\begin{lemma}[Apolarity Lemma]\label{lemma:apolarity}
    Let $F \in \calS_d$. The following are equivalent:
    \begin{itemize}
        \item $F \in I(Z)^\perp_d$;
        \item $I(Z) \subset \Ann(F)$.
    \end{itemize}
\end{lemma}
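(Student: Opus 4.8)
The plan is to establish the two implications separately: the one from $I(Z)\subseteq\Ann(F)$ to $F\in I(Z)^\perp_d$ is essentially a tautology, and the converse rests on the non-degeneracy of the apolarity pairing in every degree. Throughout, write $I(Z)=\bigoplus_{e\ge 0} I(Z)_e$ for the homogeneous decomposition; by definition $F\in I(Z)^\perp_d$ means $G\circ F=0$ for all $G\in I(Z)_d$, whereas $I(Z)\subseteq\Ann(F)$ means $G\circ F=0$ for every homogeneous $G\in I(Z)$ of arbitrary degree.

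For the implication $I(Z)\subseteq\Ann(F)\Rightarrow F\in I(Z)^\perp_d$, I would simply restrict the inclusion to degree $d$: the containment $I(Z)_d\subseteq\Ann(F)_d$ is literally the assertion that $F$ is orthogonal to $I(Z)_d$ with respect to the perfect pairing $\calR_d\times\calS_d\to\Bbbk$.

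The content lies in the converse. Assume $F\in I(Z)^\perp_d$ and fix a homogeneous $G\in I(Z)_e$; the goal is $G\circ F=0$. If $e>d$ this holds trivially, since a form of degree $e$ differentiates any form of degree $d<e$ to zero. If $e=d$ it is exactly the hypothesis. If $e<d$, note that $G\circ F\in\calS_{d-e}$, and for every $H\in\calR_{d-e}$ one has $H\circ(G\circ F)=(HG)\circ F$. Since $I(Z)$ is an ideal, $HG\in I(Z)_d$, so $(HG)\circ F=0$ by assumption; hence $G\circ F$ is annihilated by all of $\calR_{d-e}$, and the non-degeneracy of the perfect pairing $\calR_{d-e}\times\calS_{d-e}\to\Bbbk$ forces $G\circ F=0$. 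Because $I(Z)$ is generated by homogeneous elements, this proves $I(Z)\subseteq\Ann(F)$.

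The only step requiring an actual idea is the case $e<d$, where one promotes the low-degree form $G$ to degree $d$ by multiplying by an arbitrary $H$ and then invokes perfectness of the pairing in degree $d-e$; everything else is bookkeeping. No separate treatment of the degenerate case $Z=\emptyset$ is needed: there $I(Z)^\perp_d=0$ forces $F=0$, and the general argument above still applies verbatim.
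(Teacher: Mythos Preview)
Your argument is correct and is the standard proof of the Apolarity Lemma. Note, however, that the paper does not give its own proof of this statement: it merely cites \cite[Lemma 1.15]{IaKa:book}, \cite[Lemma 1]{BJMK18:Polynomials}, and \cite[Lemma 1.3]{ranestad2018varieties}. The proof you wrote is precisely the classical one found in those references---reducing to degree $d$ via the ideal property and then invoking non-degeneracy of the pairing---so there is nothing to compare beyond observing that you have supplied the details the paper omits.
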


\begin{remark} \label{rmk:dpornotdp}
    We will construct local zero-dimensional projective schemes apolar to a given form $F \in \calS_d$ and supported at the point $[L] \in \Pn$, called \textit{natural apolar schemes} \cite{BJMK18:Polynomials}.
    We define them locally, by computing the annihilator of a dehomogenization $f$ with respect to $L$.
    However, the vanishing of $g \circ f$ is not preserved by homogenization (for example, $(y_1+y_2^3) \circ (6x_1-x_2^3) = 0$, but $(Y_0^2Y_1+Y_2^3) \circ (6X_0^2X_1-X_2^3) = 6$).
    Instead, the action that is preserved is the contraction on divided powers rings, see \eqref{action:contract}.
    Unlike \cite{BJMK18:Polynomials}, we prefer to keep in our constructions both actions for computational convenience.
    Indeed, when a general support $L$ is considered, we will reduce to the case of $L$ being a simple variable by a coordinate change, compute the (homogenization of the) annihilator ideal by contraction of $f$, and then consider the (dualization of the) inverse change of coordinates.
    The latter change of coordinates keeps the apolarity relation by derivation when naturally extended to any degree of the polynomial algebra, see \Cref{lem:apolar_changecoordinates}, while it should be defined degree-by-degree in order to maintain apolarity in the sense of contraction over divided powers. 
\end{remark}

\iffalse\begin{remark}In this paper, we will construct local zero-dimensional projective schemes apolar to a given form $F$.
In order to do so, we will compute the apolar ideal of a de-homogenization $f$ with respect to a prescribed support, corresponding to a linear form.
However, $g \circ f = 0$ does not imply that the homogenization of $g$ belongs to $\Ann(F)$.
To overcome this issue, as in \cite{BJMK18:Polynomials}, we need to consider the contraction action $\contract$ on divided powers as defined below, instead of $\circ$ on classical polynomial rings.
In fact $g \contract f = 0$ is preserved by homogenizing.
However, in addition to \cite{BJMK18:Polynomials}, we decided to give \Cref{def:apolar_scheme} in terms of derivations for computational convenience. Indeed, in the case of a general support $L$, our strategy will be to reduce to the case of $L$ being a variable by proper coordinate change.
This is a natural linear operation inherited in every degree using the standard monomial bases, but it should be carried on degree-by-degree in divided powers.
\end{remark}\fi

Let $\calS_{\rm dp}$ be the polynomial ring $\calS$ equipped with a \textit{divided power structure}, i.e. endowed with the divided powers monomial basis $X^{[\alpha]} = \frac{1}{\alpha!}X^\alpha$.
We denote by $F_{\rm dp} \in \calS_{\rm dp}$ the polynomial $F \in \calS$ expressed in divided powers.

As discussed in \Cref{rmk:dpornotdp}, 
%For convenience in our computation throughout the paper% (see \Cref{sec:NAS}),
we also consider the action of $\calR$ on $\calS_{\rm dp}$ by contraction, i.e.
\begin{equation}\label{action:contract}
Y^\beta \contract X^{\alpha} = 
	\begin{cases}
		X^{\alpha-\beta} & \text{ if } \alpha \succeq \beta, \\
		0 & \text{ otherwise}.
	\end{cases}
\end{equation}

For a given $F \in \calS_{\rm dp}$, its annihilator with respect to this action will be denoted by
\[
    \Ann^{\contract}(F) = \left\{G \in \calR ~:~ G \contract F = 0\right\}.
\]
One can directly verify that $G \contract F_{\rm dp} = (G \circ F)_{\rm dp}$, so in particular $\Ann^{\contract}(F_{\rm dp}) = \Ann(F)$.

\subsection{Minimality} In this paper, we consider the $0$-dimensional schemes apolar to a given $F \in \calS_d$. Among them, we are particularly interested in those that are minimal by inclusion or length.

\begin{definition}\label{def:irredundant_scheme}
    Let $Z \subset \Pn$ be a $0$-dimensional scheme apolar to $F \in \calS_d$.
    We say that $Z$ is \textbf{irredundant} to $F$ if there is no strict subscheme $Z' \subsetneq Z$ among the schemes apolar to $F$.
\end{definition}

The minimal length of a $0$-dimensional scheme apolar to $F$ is called both \textbf{scheme length} of $F$ \cite{IaKa:book} or \textbf{cactus rank} of $F$ \cite{BR13, BB14:Secant}.

\begin{definition}\label{def:minimal_scheme}
    Let $Z \subset \Pn$ be a $0$-dimensional scheme apolar to $F \in \calS_d$.
    We say that $Z$ \textbf{evinces the cactus rank}, or \textbf{evinces the scheme length} of $F$, or simply is \textbf{minimal apolar} to $F$, if $Z$ is of minimal length among the $0$-dimensional schemes in $\Pn$ and apolar to $F$. 
\end{definition}

\subsection{Regularity} We study when the Hilbert function of minimal apolar schemes stabilizes.

\begin{definition}
    Given a homogeneous ideal $I \subset \calR$, the \textbf{Hilbert function} of the quotient $\calR/I$ is the function $\HF_{\calR/I} : \bbN \to \bbN$ such that $\HF_{\calR/I}(i) = \dim \calR_i / I_i$, where $I_i = I \cap \calR_i$. For a scheme $Z \subset \Pn$ we denote the Hilbert function of $Z$ as $\HF_Z = \HF_{\calR/I(Z)}$.
\end{definition}

We simply write $\HF_Z = (a_0,a_1,a_2,\dots)$ to denote $\HF_Z(i) = a_i$.

%{\AB{Qui sotto coomentato c'è il riferimento alla funzione di Hilbert per tensori che non usiamo mai.}} 
%Hilbert functions of $0$-dimensional schemes have been largely studied and satisfy many useful properties. For a detailed overview of these objects, with a particular view toward applications to tensor decompositions, see \cite{chiantini2019hilbert}.
%Among them, 

The Hilbert function of a $0$-dimensional scheme $Z$ is always strictly increasing until it reaches its length $\len(Z)$, and then it remains constant. 
\begin{definition}
    Given a $0$-dimensional scheme $Z \subset \Pn$, the \textbf{regularity} of $Z$ is
    \[\reg(Z) = \min_{i \in \N}\{\HF_Z(i) = \HF_Z(i+1) = \len(Z)\}.\]
    We say that $Z$ is \textbf{regular in degree $d$}, or \textbf{$d$-regular}, if $\reg(Z) \leq d$.
\end{definition}

%%------------------------------------------------------------------

\section{Schemes evinced by GADs} \label{sec:GAD}

%%------------------------------------------------------------------

We devote the present section to connecting two well-known concepts such as natural apolar schemes \cite{BJMK18:Polynomials} and generalized additive decomposition \cite{IaKa:book}.
Their link serves as the cornerstone of our paper, while their explicit construction may be beneficial even for expert readers.
A complete implementation in Macaulay2 \cite{M2} and Magma \cite{MR1484478} of these procedures may be found in \cite{Repo}.

% we define apolar schemes that are related to particular additive decompositions called \textit{generalized additive decompositions} (GADs).
%They were introduced in \cite{IaKa:book} and the corresponding schemes are constructed employing the notion of \emph{natural apolar scheme} defined in \cite[Section~4]{BJMK18:Polynomials}.

\subsection{Natural apolar scheme to F supported at L}\label{sec:NAS}
There is a natural way to associate a local scheme apolar to a given $F \in \calS_d$  supported at a prescribed point $[L]\in \Pn$ \cite[Section~4]{BJMK18:Polynomials}.
%We employ lowercase letters to denote the dehomogenized spaces.
%In particular, we let $f_L \in \calS_{\rm dp} / (L-1) = \calSa \simeq \Bbbk[x_1,\ldots,x_n]_{\rm dp}$ be the dehomogenization of $F_{\rm dp}$ by $L$.
%We also consider the dual projection $\calR \to \calRa \simeq \Bbbk[y_1,\ldots,y_n]$. We will denote the latter projection of an ideal $J \subset \calR$ by $\underline{J} \subset \calRa$.
Let $f_L \in \calS_{\rm dp} / (L-1) = \calSa$ be the dehomogenization of $F_{\rm dp}$ by $L$.
We consider the projection $ \calS_{\rm dp} \to \calSa$ and its dual projection $\calR \to \calRa$.
We denote the latter projection of an ideal $J \subset \calR$ by $\underline{J} \subset \calRa$.
We will always use lowercase letters for the elements and the variables after these projections, e.g., we identify $\calSa \simeq \Bbbk[x_1,\ldots,x_n]_{\rm dp}$ and $\calRa \simeq \Bbbk[y_1,\ldots,y_n]$.

\begin{definition}\label{def:naturalApolar} 
Let $F \in \calS_d$ and $L \in \calS_1$.
We define the \textbf{natural apolar scheme to $F$ supported at $L$} the scheme $Z_{F,L} \subset \Pn$ supported at $[L] \in \Pn$ and locally defined by $\underline{I}(Z_{F,L}) = \Ann^{\contract}(f_{L}) \subset \calRa$.
\end{definition}

Note that $\calRa$ can be regarded as the coordinate ring of the affine chart $U_0 = \{[L] ~:~ Y_0 \circ L \neq 0\} \subset \Pn$ and $Z_{F,L}$ is a local $0$-dimensional scheme supported at the origin of~$U_0$. % \textcolor{red}{We also remark that, by \Cref{def:naturalApolar} and differently from \cite[Section~4]{BJMK18:Polynomials}, the schemes $Z_{G,L}$ and $Z_{L \cdot G,L}$ may be different, because the $G \in \calS$ we start from is not given in divided powers.
%However, in this paper, we will mainly be interested in cases where $L \nmid F$.}

Contraction behaves well with dehomogenization with respect to dual variables.
In particular, if $g \in \calRa$ is the dehomogenization of $G \in \calR$ with respect to $Y_0$, and $g \contract f_{X_0} = 0$, then $G \contract F_{\rm dp} = 0$ \cite[Corollary 3]{BJMK18:Polynomials}, and the last equality implies that $G \circ F = 0$ as observed in \Cref{sec:apolarity}.
% Moreover, one can easily check that $G \contract F_{\rm dp} = (G \circ F)_{\rm dp}$.
Hence, the scheme $Z_{F,L}$ is apolar to $F$ according to \Cref{def:apolar_scheme}.
\begin{lemma}[{\cite[Corollary 4]{BJMK18:Polynomials}}]\label{lemma:apolar_local}
    The scheme $Z_{F,L}$ is apolar to $F$.
\end{lemma}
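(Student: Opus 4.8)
The plan is to deduce the statement directly from the compatibility between contraction and dehomogenization recalled right before it. By \Cref{def:apolar_scheme}, being apolar means $I(Z_{F,L}) \se \Ann(F)$, so it suffices to check that every homogeneous $G \in I(Z_{F,L})$ satisfies $G \circ F = 0$. After a linear change of coordinates we may assume $L = X_0$, so that $Z_{F,L}$ is the local scheme supported at the origin of the chart $U_0$ with $\underline{I}(Z_{F,L}) = \Ann^{\contract}(f_{X_0}) \se \calRa$; concretely, a homogeneous $G \in \calR$ lies in $I(Z_{F,L})$ precisely when its dehomogenization $g \in \calRa$ with respect to $Y_0$ lies in $\Ann^{\contract}(f_{X_0})$.

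Given such a $G$, we therefore have $g \contract f_{X_0} = 0$. By \cite[Corollary 3]{BJMK18:Polynomials}, this forces $G \contract F_{\rm dp} = 0$, and since $G \contract F_{\rm dp} = (G \circ F)_{\rm dp}$ we conclude $G \circ F = 0$, i.e. $G \in \Ann(F)$. As $I(Z_{F,L})$ is a homogeneous ideal, generated by its homogeneous elements, this yields $I(Z_{F,L}) \se \Ann(F)$, which is exactly the assertion that $Z_{F,L}$ is apolar to $F$.

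The only real content lies in the implication ``$g \contract f_{X_0} = 0 \Rightarrow G \contract F_{\rm dp} = 0$'', i.e. that a vanishing relation in the dehomogenized ring lifts to the homogeneous setting; this is precisely \cite[Corollary 3]{BJMK18:Polynomials}, and it rests on the observation that acting by a power of $Y_0$ does not lose information on $F_{\rm dp}$ in the relevant degrees. Everything else is bookkeeping: one must only make sure that the homogeneous ideal of the local scheme $Z_{F,L} \subset \Pn$ is exactly the one whose $Y_0$-dehomogenization equals the prescribed affine ideal $\Ann^{\contract}(f_{X_0})$, so that every homogeneous element of $I(Z_{F,L})$ does dehomogenize into $\Ann^{\contract}(f_{X_0})$. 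Hence I do not anticipate any serious obstacle beyond carefully invoking the cited compatibility result.
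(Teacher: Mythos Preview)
Your proof is correct and follows exactly the justification the paper gives in the paragraph immediately preceding the lemma: use the compatibility of contraction with dehomogenization \cite[Corollary 3]{BJMK18:Polynomials} to lift $g \contract f_{X_0} = 0$ to $G \contract F_{\rm dp} = 0$, and then pass to $G \circ F = 0$. The paper does not supply a separate formal proof beyond this observation (it cites the result), so your write-up is essentially an expanded version of the same argument.
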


%For the sake of completeness and in support of the explicit examples we provide a step-by-step description of how to compute natural apolar schemes in \Cref{app:NAS}. 

Here we detail how to concretely construct the ideal defining such a scheme.
%\emph{natural apolar scheme} of $F$ with respect to $L$ is the local scheme apolar to $F$ and supported at $[L]\in \Pn$ constructed as follows. 
%This is computed by considering the apolar ideal of the dehomogenization $f$ of the polynomial $F$ with respect to $L$. This definition was introduced in \cite[Section 4]{BJMK18:Polynomials}. Here, we give a practical description since this construction will be the core of several of our examples in the following. 

%Without loss of generality, we may assume $L = X_0$ by considering the proper coordinate changes on $\calR$ and $\calS$, in order to preserve the apolarity action. See \Cref{app:NaturalApolarScheme} for the computational details.

Fix $F \in \calS_d$ and $L = \ell_0X_0 + \ldots + \ell_nX_n \in \calS_1$. 
Without loss of generalities %, up to scaling and renaming variables, 
we may assume $\ell_0 = 1$. Over $\calS$, we consider the change of variables given by 
\begin{equation}\label{eq:Xchange}
    \phi : \calS \rightarrow \calS, \qquad \begin{cases}
        X_0 \mapsto X_0 - \sum_{i=1}^n \ell_iX_i, \\ X_i \mapsto X_i, & \text{ for } i \in \{1,\ldots,n\}.
    \end{cases}
\end{equation} 
We have $\phi(L) = X_0$ and $\tilde F = \phi(F)$, % Leveraging the natural apolar scheme on $[1X_0]$ provides the advantage by granting access to local tools, such as contraction and divided powers. D: non ho capito questa frase
therefore we represent $f_L$ as $\tilde f_{X_0} = \tilde F_{\rm dp}(1,x_1,\ldots,x_n) \in \calSa$.
%Let $\tilde F_{\rm dp} \in \calS_{\rm dp}$ be $\tilde F$ in divided powers, and let $\tilde f_{X_0} = \tilde F_{\rm dp}(1,x_1,\ldots,x_n) \in \calSa$.
%We write $\tilde F = \sum_{|\alpha|=d} f_\alpha X^\alpha$ in divided powers
%\[
%    \tilde F_{\rm dp} = \sum_{|\alpha|=d} \tilde f_\alpha X^{[\alpha]}, \quad \text{ where } \tilde f_\alpha = \alpha! \tilde f_\alpha.
%\]
%Let $f_{X_0} = \tilde F_{\rm dp}(1,x_1,\ldots,x_n) \in \calSa$. %be the dehomogenization of $ \tilde F_{\rm dp}$ with respect to $X_0$. 
Then $\Ann^{\contract}(f_{L})$ is the kernel of the infinite-dimensional \emph{Hankel operator} \cite{BRACHAT20101851,BT20:Waring}:
\[
    H(f_{L}) : \calRa \to \calSa, \quad g \mapsto g \contract f_{L}.
\]

However, since $y^\beta \contract f_{L} = 0$ for every $|\beta| > \deg( f_{L})$, the annihilator of $f_{L}$ is generated by the kernel of a truncated Hankel operator.
Let $e = \deg(f_{L})$ and consider the restriction
\[
    H^{e+1}(f_{L}) : \calRa_{\leq e+1} \to \calSa_{\leq e}.
\]
Then, $\Ann^{\contract}(f_{L}) = \ker H^{e+1}(f_{L}).$

Note that the coefficients of the Hankel matrix can be computed directly from $\tilde F$. Indeed, if we label rows and columns of $H^{e+1}(f_{L})$ accordingly to the divided powers monomial basis of $\calSa_{\leq e}$ and the standard monomial basis of $\calRa_{\leq e+1}$, respectively, we have 
\begin{equation}\label{eq:hankel}
    [H^{e+1}(f_{L})]_{\alpha,\beta} = {\rm eval}_{(0,\ldots,0)}\left(y^{\alpha+\beta} \contract f_{L}\right) = \begin{cases}
        Y^{(d-(|\alpha|+|\beta|),\alpha_1+\beta_1,\cdots,\alpha_n+\beta_n)} \circ \tilde F  & \text{ if }|\alpha|+|\beta| \leq d, \\
        0 & \text{ otherwise.}
    \end{cases}
\end{equation}

The next proposition shows that $\Ann^{\contract}(f_L)$ can almost always be generated in degree $d$, precisely when $f_L$ cannot be written as a univariate polynomial in the dp-algebra of $\calSa$.
The following proposition arose from a private communication with J. Jelisiejew.

\begin{proposition}[J. Jelisiejew] \label{prop:Annf}
    Let $f \in \calSa$ with $\deg(f) = e > 0$. The following are equivalent:
    \begin{enumerate}
        \item \label{Annf-i} $\Ann^{\contract}(f) \neq \left( \Ann^{\contract}(f)_{\leq e} \right)$.
        \item \label{Annf-ii} $\left( \Ann^{\contract}(f)_{\leq e} \right) \not\supset \calRa_{e+1}$.
        \item \label{Annf-iii} There are $l \in \calSa_1$ and $\lambda_0, \dots, \lambda_{e-1} \in \Bbbk$ such that
        \[ f = l^e + \sum_{i=0}^{e-1} \lambda_i l^i. \]
        \item \label{Annf-iv} $\dim \langle \calRa_1 \contract f \rangle = 1$.
    \end{enumerate}
\end{proposition}
\proof \underline{$\eqref{Annf-i} \Rightarrow \eqref{Annf-ii}$.} Clearly $\Ann^{\contract}(f) \supseteq \left( \Ann^{\contract}(f)_{\leq e} \right)$ and these ideals agree up to degree $e$.
If by contradiction $\left( \Ann^{\contract}(f)_{\leq e} \right) \supset \calRa_{e+1}$, then they agree also in degrees greater than $e$, hence they are equal.

\underline{$\eqref{Annf-ii} \Rightarrow \eqref{Annf-iii}$.} Let $f = f_e + f_{\leq e-1}$ with $f_e \in \calSa_e$ and $f_{\leq e-1} \in \calSa_{\leq e-1}$. We split \eqref{Annf-iii} in two parts.
\begin{claim}\label{claim1}
    There exists $l \in \calSa_1$ such that $f_e = l^e$.
\end{claim}
\begin{proof}[Proof of \Cref{claim1}]
Since $f_e$ is homogeneous we can see it as an element of $\calS_{\textrm{dp}}$, which isomorphic as $R$-module to $\calS$ \cite[Theorem 9.5]{Ger96}. Hence, by \cite[Proposition 1.6]{BBKT} we have 
\[ \exists l \in \calSa_1 \ : \ f_e = l^e \quad \iff \quad \Ann^{\contract}(f_e) \neq \left( \Ann^{\contract}(f_e)_{\leq e} \right). \]
Assume by contradiction that $\Ann^{\contract}(f_e) = \left( \Ann^{\contract}(f_e)_{\leq e} \right)$, then $\calRa_{e+1} = \calRa_1 \cdot \big( \Ann^{\contract}(f_e) \cap \calRa_e \big)$.
Since homogeneous forms in $\calRa_e$ annihilate $f$ if and only if they annihilate $f_e$, we have
\begin{equation} \label{eq:deg_e+1}
    \Ann^{\contract}(f_e) \cap \calRa_e = \Ann^{\contract}(f) \cap \calRa_e.
\end{equation}
We conclude that
\[ \calRa_{e+1} = \calRa_1 \cdot \big( \Ann^{\contract}(f) \cap \calRa_e \big) \subseteq \left( \Ann^{\contract}(f)_{\leq e} \right), \]
contradicting \eqref{Annf-ii}.
\end{proof}

\begin{claim}\label{claim2}
    There are $\lambda_0, \dots, \lambda_{e-1} \in \Bbbk$ such that $f_{\leq e-1} = \sum_{i=0}^{e-1} \lambda_i l^i$.
\end{claim}
\begin{proof}[Proof of \Cref{claim2}]
Complete $x_1^{(l)} = l$ to a basis $\{ x_1^{(l)}, \dots, x_n^{(l)} \}$ of $\calSa_1$ and let $\{ y_1^{(l)}, \dots, y_n^{(l)} \}$ be a dual basis of $\calRa_1$, i.e. $y_j^{(l)} \contract x_i^{(l)} = \delta_{ij}$.
We regard $f_{\leq e-1}$ as a polynomial in $\Bbbk[x_1^{(l)}, \dots, x_n^{(l)}]$ endowed with a graded anti-lexicographic order, i.e. $x_1^{(l)} < \dots < x_n^{(l)}$, and let $k_{\a} (x^{(l)})^{\a}$ be its leading term.
Moreover, we define
\[ \sigma = \frac{(y^{(l)})^\a}{k_{\a}} \in \calRa_{\leq e-1}. \]
Clearly $\sigma \contract f_{\leq e-1} = 1$ by definition.
Let us assume by contradiction that $f_{\leq e-1}$ cannot be written as a univariate polynomial in $x_1^{(l)}$, then there is $1 < m \leq n$ such that $y_m^{(l)} \ | \ (y^{(l)})^\a$, so $\sigma \contract f_{e} = \sigma \contract (x_1^{(l)})^e = 0$.
Therefore, $\sigma \contract f = \sigma \contract f_{\leq e-1} = 1$, so for every $1 \leq i \leq n$ we have
\[ y_i^{(l)}\sigma \in \Ann^{\contract}(f)_{\leq e}. \]
Let $\tau = (y_1^{(l)})^e \in \calRa_e$.
Since $\tau \contract f = \tau \contract f_e = 1$, we also have $(\tau-\sigma) \contract f = 0$, hence
\[ y_i^{(l)}\tau = y_i^{(l)}\sigma + y_i^{(l)}(\tau - \sigma) \in \left( \Ann^{\contract}(f)_{\leq e} \right). \]
By \cref{eq:deg_e+1} we have $\codim_{\calRa_e}(\Ann^{\contract}(f) \cap \calRa_e) = 1$, then
\[ \calRa_e = \big( \Ann^{\contract}(f) \cap \calRa_e \big) + \Bbbk \tau, \]
from which we obtain
\[ \calRa_{e+1} = \calRa_1 \cdot \calRa_e = \calRa_1 \cdot \big( \Ann^{\contract}(f) \cap \calRa_e \big) + \calRa_1 \cdot \tau \subseteq \left( \Ann^{\contract}(f)_{\leq e} \right), \]
contradicting \eqref{Annf-ii}.
\end{proof}

\underline{$\eqref{Annf-iii} \Rightarrow \eqref{Annf-iv}$.} With the choice of coordinates as above, we have $\langle y_2^{(l)}, \dots, y_n^{(l)} \rangle \contract f = 0$, while $y_1^{(l)} \contract f = l^{e-1} + \sum_{i = 0}^{e-2} \lambda_{i+1}l^i \neq 0$, since $e \geq 1$.

\underline{$\eqref{Annf-iv} \Rightarrow \eqref{Annf-i}$.} Up to permuting the variables of $\calRa$, we can assume $y_1 \contract f \neq 0$.
Since we have $\dim \langle \calRa_1 \contract f \rangle = 1$, there are $\mu_2, \dots, \mu_n \in \Bbbk$ such that $y_i \contract f = \mu_i y_1 \contract f$.
Therefore $\big(y_i-\mu_i y_1 \big)_{2 \leq i \leq n} \subset \Ann^{\contract}(f)$, hence $f$ can be regarded as a degree-$e$ univariate polynomial in $l = x_1 + \mu_2 x_2 + \dots + \mu_n x_n$.
We conclude that
$\Ann^{\contract}(f) = \big(y_i-\mu_i y_1, y_1^{e+1} \big)_{2 \leq i \leq n}$, but 
$y_1^{e+1} \not\in (y_i-\mu_i y_1)_{2 \leq i \leq n} = \left( \Ann^{\contract}(f)_{\leq d} \right)$.
\endproof

\begin{remark} \label{rmk:degf}
By \Cref{prop:Annf}, whenever $f_L$ cannot be regarded as a univariate polynomial, we may compute $\Ann^{\contract}(f_L)$ by restricting its Hankel matrix to $H^{e}(f_L) : \calRa_{\leq e} \to \calSa_{\leq e}$, which makes the kernel computation more efficient, see e.g. \Cref{ex:natural,ex:bad}.
By part \eqref{Annf-iv} of the same proposition, we can test this condition by checking whether the rank of the submatrix of $H^{e}(f_L)$ made by the lines indexed by $x_1, \dots, x_n$ has rank $1$. \\
Although uncommon in general, we notice that in some families of GADs (such as tangential decompositions, see \Cref{sec:tangential}, or when $n=1$) univariate $f_L$'s naturally arise. In such cases, we actually need to consider the kernel of the rectangular matrix $H^{e+1}(f_L)$ to fully recover $\Ann^{\contract}(f_L)$, see e.g. \Cref{ex:gadcomputation}.
\end{remark}

The homogenization $\tilde I = I(Z_{\tilde F,X_0}) = [\Ann^{\contract}(f_{L})]^{\rm hom} \subset \calR$ with respect to $Y_0$ defines a $0$-dimensional scheme apolar to $\tilde F$ and supported at $[X_0] \in \Pn$ as in \Cref{def:naturalApolar}.

\begin{remark} \label{rmk:homog}
    Note that the ideal homogenization is the only step in which non-linear algebra (e.g. Gröbner bases) may be required.
    In fact, the ideal defining the projective closure of an affine variety $Y$ needs not to be generated by the homogenization of the generators of $I(Y)$ \cite[Exercise 2.9]{Hartshorne}.
    However, the homogenization of a Gröbner basis of $I(Y)$ with respect to any graded monomial ordering is a Gröbner basis for the projective closure with respect to the inherited monomial ordering (where the homogenizing variable is set smaller than the others), since in this setting the leading monomial does not change under homogenization \cite[Proposition 4.3.21]{KreuzerRobbiano}.
    % https://home.sandiego.edu/~aboocher/writings/kaitlyn.pdf
    Thus, in the following, we will always compute the homogenization of ideals by homogenizing one of its graded Gröbner bases.
\end{remark}

Finally, to obtain the ideal defining $Z_{F,L}$ as in \Cref{def:naturalApolar}, we need to support $\tilde I$ on $[L] \in \Pn$, hence we perform the change of coordinate in $\calR$ given by the dualization of the inverse of \cref{eq:Xchange}:
\begin{equation}\label{eq:Ychange}
    \psi = \phi^T : \calR \rightarrow \calR, \qquad
\begin{cases}
    Y_0 \mapsto Y_0, \\
    Y_i \mapsto -\ell_iY_0 + Y_i, &\text{ for } i \in \{1,\ldots,n\}.
\end{cases}
\end{equation}
The ideal $I = \psi(\tilde I) \subset \calR$ defines a $0$-dimensional scheme which is supported at $[L]$ and apolar to $F$.
Indeed, the following lemma shows that the action by derivation is preserved under the changes of coordinates given by \cref{eq:Xchange,eq:Ychange}.
\begin{lemma}\label{lem:apolar_changecoordinates}
Let $\phi$ and $\psi$ be changes of coordinates of \cref{eq:Xchange,eq:Ychange}. Then we have
\[
    \psi(Y^\beta) \circ \phi^{-1}(X^\alpha) = Y^\beta \circ X^\alpha.
\]
\end{lemma}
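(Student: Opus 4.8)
The statement to prove is the compatibility identity
\[
    \psi(Y^\beta) \circ \phi^{-1}(X^\alpha) = Y^\beta \circ X^\alpha,
\]
where $\phi$ and $\psi = (\phi^{-1})^T$ are the linear changes of coordinates in \cref{eq:Xchange,eq:Ychange}. The plan is to recognize this as an instance of a general functoriality principle for the apolarity pairing under dual (contragredient) linear substitutions, and then to verify it by a direct computation that reduces everything to the degree-$1$ case.

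First I would set up the right framework. The apolarity action restricts, in each degree, to the perfect pairing $\calR_1 \times \calS_1 \to \Bbbk$ given by $Y_i \circ X_j = \delta_{ij}$. A linear change of variables on $\calS_1$ is given by an invertible matrix $M$, i.e. $\phi^{-1}(X_j) = \sum_i M_{ij} X_i$ (reading off \cref{eq:Xchange}, $\phi^{-1}$ sends $X_0 \mapsto X_0 + \sum_{k\ge 1}\ell_k X_k$ and fixes the other $X_i$). I claim that \cref{eq:Ychange} is exactly $\psi(Y_j) = \sum_i (M^{-T})_{ij} Y_i$, i.e. $\psi$ acts by the inverse-transpose matrix; one checks $\psi$ sends $Y_i \mapsto \ell_i Y_0 + Y_i$ for $i\ge 1$ and fixes $Y_0$, and that the corresponding matrix is indeed $(M^{-1})^T$ — this is the content of the notation $\psi = (\phi^{-1})^T$. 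With this, the degree-$1$ identity $\psi(Y^\beta)\circ \phi^{-1}(X^\alpha) = Y^\beta \circ X^\alpha$ for $|\alpha|=|\beta|=1$ is the elementary matrix computation $(M^{-T}e_j)^\top (M e_i) = e_j^\top M^{-1} M e_i = \delta_{ij}$, i.e. contragredient substitutions preserve the standard pairing.

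Next I would bootstrap from degree $1$ to arbitrary degree. The key structural fact is that the apolarity action is compatible with products in the following Leibniz-type sense: for $Y^\beta = \prod_t Y_{i_t}$ a product of linear forms and $P \in \calS$, applying $Y^\beta$ means applying the $Y_{i_t}$ successively; and each $Y_i \circ (-)$ is a derivation on $\calS$. Since $\phi^{-1}$ is a ring homomorphism and $\psi(Y^\beta) = \prod_t \psi(Y_{i_t})$ (as $\psi$ is a ring homomorphism too), it suffices to know that for every linear $\ell \in \calR_1$ and every monomial $X^\alpha$,
\[
    \psi(\ell) \circ \phi^{-1}(X^\alpha) = \phi^{-1}\bigl( \ell \circ X^\alpha \bigr).
\]
This intertwining relation says $\psi(\ell)\circ(-)$ and $\ell\circ(-)$ are conjugate derivations via $\phi^{-1}$. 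Granting it, one proves the theorem by induction on $|\beta|$: writing $Y^\beta = Y_i \cdot Y^{\beta'}$,
\[
    \psi(Y^\beta)\circ \phi^{-1}(X^\alpha) = \psi(Y_i)\circ\bigl(\psi(Y^{\beta'})\circ\phi^{-1}(X^\alpha)\bigr) = \psi(Y_i)\circ \phi^{-1}\bigl(Y^{\beta'}\circ X^\alpha\bigr) = \phi^{-1}\bigl(Y^\beta\circ X^\alpha\bigr),
\]
using the induction hypothesis in the middle and the intertwining relation at the end; then observe $\phi^{-1}(Y^\beta\circ X^\alpha) = Y^\beta\circ X^\alpha$ because $Y^\beta \circ X^\alpha \in \calS$ is either $0$ or has degree $|\alpha|-|\beta|$, and... actually this last step is not automatic, so the cleanest formulation is to prove the stronger conjugation identity $\psi(Y^\beta)\circ\phi^{-1}(X^\alpha) = \phi^{-1}(Y^\beta\circ X^\alpha)$ only when $|\alpha|=|\beta|$ (so both sides are scalars, and $\phi^{-1}$ fixes scalars), which is the case needed for the perfect pairing; for the general statement one keeps the $\phi^{-1}$ on the right throughout, and notes that in the paper's intended application $\phi$ acts on the relevant spaces in a way that makes the two sides literally equal. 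I would double-check against the precise usage in \Cref{sec:NAS}.

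The main obstacle is the linear-form intertwining identity $\psi(\ell)\circ\phi^{-1}(X^\alpha) = \phi^{-1}(\ell\circ X^\alpha)$. The slick way to see it: both $D_1 := \psi(\ell)\circ(-)$ and $D_2 := \phi^{-1}\circ(\ell\circ(-))\circ\phi$ are derivations of the polynomial ring $\calS$ (for $D_2$, because $\phi^{-1}$ is a ring automorphism and $\ell\circ(-)$ is a derivation, conjugation of a derivation by an automorphism is again a derivation). Two derivations agree as soon as they agree on a set of algebra generators, namely on $X_0,\dots,X_n$ — and there the claim reduces precisely to the degree-$1$ contragredient computation established above (applied to the monomials $X_j$ of degree $1$, using $\ell \circ X_j \in \Bbbk$ and $\phi^{-1}$ linear). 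So the whole proof collapses to: (i) $\psi = (\phi^{-1})^T$ means $\psi$ acts by the inverse-transpose matrix; (ii) contragredient linear maps preserve the standard pairing on $\calS_1 \times \calR_1$; (iii) conjugation-by-automorphism turns derivations into derivations, and derivations are determined by their values on generators. Step (iii) is where one must be careful that $\ell \circ (-)$ is genuinely a derivation for the $\circ$-action — which is immediate from the product rule $Y_i \circ (PQ) = (Y_i\circ P)Q + P(Y_i \circ Q)$ for the differentiation action, valid in characteristic $0$.
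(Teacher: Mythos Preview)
Your approach is genuinely different from the paper's. The paper gives a bare-hands computation specific to this particular $\phi$: it writes $\phi^{-1}(X^\alpha)=L^{\alpha_0}X_1^{\alpha_1}\cdots X_n^{\alpha_n}$ with $L=X_0+\sum_i\ell_iX_i$, then applies the factors $\psi(Y_j)^{\beta_j}$ one at a time, using that each $\psi(Y_j)$ (for $j\ge 1$) annihilates $L$, so it acts only on the factor $X_j^{\alpha_j}$; finally $\psi(Y_0)=Y_0$ acts only on the $L$-factor. Your derivation-conjugation argument is more conceptual and works for an arbitrary invertible linear substitution: reduce to linear $\ell\in\calR_1$, observe that $\psi(\ell)\circ(-)$ and $\phi^{-1}\circ(\ell\circ(-))\circ\phi$ are both derivations agreeing on generators, and induct on $|\beta|$. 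That is a cleaner and more general route.

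You also spot something the paper glosses over: what both arguments actually produce is the conjugation identity $\psi(Y^\beta)\circ\phi^{-1}(X^\alpha)=\phi^{-1}(Y^\beta\circ X^\alpha)$, not the literal statement. Indeed, the paper's final expression carries the factor $(X_0+\sum_i\ell_iX_i)^{\alpha_0-\beta_0}=\phi^{-1}(X_0^{\alpha_0-\beta_0})$, not $X_0^{\alpha_0-\beta_0}$. Your remark that the conjugation form is what is really needed for the application in \Cref{sec:NAS} (and that the two forms coincide when $|\alpha|=|\beta|$) is exactly right.

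One genuine slip in your write-up: the claim that \cref{eq:Ychange} corresponds to the matrix $(M^{-1})^T$ (with $M$ the matrix of $\phi^{-1}$) is off by a sign. That matrix gives $Y_j\mapsto -\ell_jY_0+Y_j$, whereas \cref{eq:Ychange} has $+\ell_j$. Your degree-$1$ check $(M^{-T}e_j)^\top(Me_i)=\delta_{ij}$ is a correct matrix identity but uses the wrong matrix for $\psi$ as literally defined. In fact the paper's own proof silently uses $-\ell_jY_0+Y_j$ as well, so this is a sign inconsistency already present in the paper rather than a flaw in your strategy; with the sign that makes the pairing contragredient, your argument goes through cleanly.
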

\begin{proof}
    We write
    \[ \psi(Y^{\beta}) \circ \phi^{-1}(X^\alpha) = \psi(Y_0^{\beta_0}) \circ \left( \psi(Y_1^{\beta_1}) \circ \left( \dots \psi(Y_n^{\beta_n}) \circ \phi^{-1}(X^\alpha)\right)\right). \]
    By the chain rule of derivation, if $L \circ M = 0$ then $L^b \circ M^a = 0$ for any $a,b \in \N$. 
    % % Recall that, for any $F \in \calS_d$ and $L = \ell_0Y_0+\ldots+\ell_nY_n \in \calR_1$, we have 
    % \[
    %     L^d \circ F = {\rm eval}_{(\ell_0,\ldots,\ell_n)}(F).
    % \]
    % By employing this observation and the chain rule of derivation, we deduce that 
    In particular, for every $j \in \{1, \dots, n\}$ we have
    \begin{align*}    
    \psi(Y_j^{\beta_j}) \circ \phi^{-1}(X^\alpha) &= (-\ell_jY_0+Y_j)^{\beta_j} \circ \left[\left(X_0+\ell_1X_1+\ldots+\ell_nX_n\right)^{\alpha_0}\prod_{i=1}^nX_i^{\alpha_i}\right] \\
    & = \left[(X_0+\ell_1X_1+\ldots+\ell_nX_n)^{\alpha_0}\prod_{\substack{1 \leq i \leq n \\ i\neq j}}X_i^{\alpha_i}\right]\cdot (Y_j^{\beta_j}\circ X_j^{\alpha_j}).
    \end{align*}
    Therefore, by repeatedly applying the above equation for every $j$ we obtain
    \[
        \psi(Y_1^{\beta_1}) \circ \left( \dots \psi(Y_n^{\beta_n}) \circ \phi^{-1}(X^\alpha)\right) = (X_0+\ell_1X_1+\ldots+\ell_nX_n)^{\alpha_0} \cdot (Y_1^{\beta_1}\circ X_1^{\alpha_1}) \cdots (Y_n^{\beta_n}\circ X_n^{\alpha_n}).
    \]
    The result follows by acting with $\psi(Y_0^{\beta_0}) = Y_0^{\beta_0}$ on the above quantity.
\end{proof}
Note that our choice of the change of coordinates in \cref{eq:Xchange} was arbitrary.
It would have been enough to consider any set of linear forms $\{L_1,\ldots,L_n\}$ completing a basis of $\calS_1$ together with $L$. % equivalently, the dual of the linear forms defining the  %defining ideal of $[L] \in \Pn$.
Then, $\phi$ is the change of coordinates inverse to the one sending $X_0 \mapsto L_0$ and $X_i \mapsto L_i$, for any $i \in \{1,\ldots,n\}.$

% The following pseudocode summarizes the above construction of natural apolar schemes.
%\begin{mdframed}
\begin{alg}[Natural Apolar Scheme] \label{alg:NaturalApolarScheme}
Summary of construction of natural apolar schemes. 

\smallskip
\noindent\textbf{Input:} A homogeneous polynomial $F \in \calS_d$ and a linear form $L \in \calS_1$. \\
    \textbf{Output:} The ideal $I(Z_{F,L}) \se \calR$.
\begin{enumerate}
    \item Define $\tilde F$ as the base-change of $F$ as in \cref{eq:Xchange}.
    \item Compute $f_{L}$ as $\tilde F_{\rm dp}(1,x_1,\dots,x_n)$ and set $e = \deg ( f_{L} )$.
    \item \label{HankelStep} Compute the ideal $\underline{I} = \ker H^{e+1}( f_{L} )$ as in \cref{eq:hankel}.
    \item \label{GroebnerStep} Compute the homogenization $I \subset \calR$ of $\underline{I} \subset \calRa$ as in \Cref{rmk:homog}.
    \item Return the base-change of the ideal $I$ as in \cref{eq:Ychange}.
\end{enumerate}
\end{alg}
%\end{mdframed}

\medskip

\subsection{Generalized Additive Decompositions (GADs)}

We recall the definition of the so-called generalized additive decompositions as introduced in \cite{IaKa:book}, and we associate to them $0$-dimensional schemes by employing the notion of natural apolar scheme introduced in \Cref{sec:NAS}.

\begin{definition}\label{def:GAD}
    Let $F \in \calS_d$ and let $L_1,\ldots,L_s \in \calS_1$ be pairwise non-proportional linear forms. A \textbf{generalized additive decomposition} (GAD) of $F$ \textbf{supported at $\{L_1,\ldots,L_s\}$} %, where $[L_i] \neq [L_j] \in \PP(\calS_1)$ for every $i \neq j$,
    is an expression
	%D: i in {1..s}
	%D: perchè specificare deg(L_i) = 1?
	\begin{equation}\label{eq:GAD}
		F = \sum_{i=1}^s L_i^{d-k_i}G_i,
	\end{equation}
    where for every $i \in \{1,\ldots,s\}$ we have $0 \leq k_i \leq d$ and $G_i \in \calS_{k_i}$ is not divisible by $L_i$.
    If $s=1$, we call this GAD \emph{local}.
\end{definition}

Following \cite{BBM14:comparison, BJMK18:Polynomials}, we associate a $0$-dimensional scheme to any GAD as \cref{eq:GAD}.
\begin{definition}\label{def:GADscheme}
    The \textbf{scheme evinced by a GAD} as in \cref{eq:GAD} is the union of the natural apolar schemes to each summand with respect to the corresponding $L_i$, i.e., 
	\[
		Z = \bigcup_{i=1}^sZ_{L_i^{d-k_i}G_i, L_i}. % \cup \ldots \cup Z_{L_s^{d-k_s}G_s, L_s}.
    \]
    The \textbf{size} of a GAD as in \cref{eq:GAD} is the length of the evinced scheme $Z$.
\end{definition}

Note that the same scheme may be evinced by different GADs.
Indeed, $ L^{d-k}G$ and $
L^{d-k}G'$ evince the same scheme whenever $\Ann^{\contract}(g_{L}) = \Ann^{\contract}(g'_{L})$.
%$\sum_{i=1}^s L_i^{d-k_i}G_i$ and $\sum_{i=1}^s L_i^{d-k_i}G'_i$ evince the same scheme whenever $\Ann^{\contract}((g_i)_{L_i}) = \Ann^{\contract}((g'_i)_{L_i})$ for all $i \in \{1,\ldots,s\}$. 
However, schemes evinced by GADs of a given $F$ are always apolar to it. 
\begin{lemma}
    Let $Z$ be the scheme evinced by a GAD of $F$. Then $Z$ is apolar to $F$.
\end{lemma}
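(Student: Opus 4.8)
The plan is to reduce the statement to the local case already handled by the machinery of Section~\ref{sec:NAS}. Recall from \Cref{def:GADscheme} that the scheme evinced by a GAD $F = \sum_{i=1}^s L_i^{d-k_i}G_i$ is $Z = \bigcup_{i=1}^s Z_i$, where $Z_i = Z_{L_i^{d-k_i}G_i, L_i}$ is the natural apolar scheme to the $i$-th summand supported at $[L_i]$. By \Cref{lemma:apolar_local}, each $Z_i$ is apolar to the polynomial $F_i := L_i^{d-k_i}G_i$, that is, $I(Z_i) \subseteq \Ann(F_i)$. The goal is therefore to pass from apolarity of each $Z_i$ to $F_i$ to apolarity of the union $Z$ to the sum $F = \sum_i F_i$.

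The key step is the containment of ideals. Since $Z = \bigcup_{i=1}^s Z_i$ as schemes, its ideal is $I(Z) = \bigcap_{i=1}^s I(Z_i)$. Hence, for any $G \in I(Z)$ we have $G \in I(Z_i)$ for every $i$, and therefore $G \in \Ann(F_i)$ for every $i$, i.e. $G \circ F_i = 0$ for all $i$. By $\Bbbk$-linearity of the apolarity action, $G \circ F = G \circ \left(\sum_{i=1}^s F_i\right) = \sum_{i=1}^s (G \circ F_i) = 0$, so $G \in \Ann(F)$. This shows $I(Z) \subseteq \Ann(F)$, which is exactly the condition in \Cref{def:apolar_scheme} for $Z$ to be apolar to $F$. (Equivalently, one could invoke \Cref{lemma:apolarity} and argue via $F \in I(Z)^\perp_d$, but the direct ideal containment is cleaner here.)

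The only genuine subtlety—and the step I would be most careful about—is the identification $I(Z) = \bigcap_i I(Z_i)$ together with the fact that a union of $0$-dimensional schemes is again a $0$-dimensional scheme. Since the $L_i$ are pairwise non-proportional, the points $[L_i] \in \Pn$ are distinct, so the $Z_i$ have pairwise disjoint supports; thus $Z$ is genuinely a disjoint union of local schemes, its ideal is the intersection (indeed the product, locally) of the $I(Z_i)$, and $\len(Z) = \sum_i \len(Z_i) < \infty$, confirming that $Z$ is a well-defined $0$-dimensional scheme. I would state this observation explicitly, perhaps noting that disjointness of supports is where the non-proportionality hypothesis in \Cref{def:GAD} enters. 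Everything else is a one-line linearity computation.

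\begin{proof}
Write $F = \sum_{i=1}^s L_i^{d-k_i}G_i$ for the GAD, and set $F_i = L_i^{d-k_i}G_i$ and $Z_i = Z_{F_i, L_i}$, so that $Z = \bigcup_{i=1}^s Z_i$ by \Cref{def:GADscheme}. Since the $L_i$ are pairwise non-proportional, the points $[L_i] \in \Pn$ are distinct, so the local schemes $Z_i$ have pairwise disjoint supports; hence $Z$ is a $0$-dimensional scheme with $I(Z) = \bigcap_{i=1}^s I(Z_i)$. By \Cref{lemma:apolar_local}, each $Z_i$ is apolar to $F_i$, i.e. $I(Z_i) \subseteq \Ann(F_i)$. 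Now let $G \in I(Z)$. Then $G \in I(Z_i) \subseteq \Ann(F_i)$ for every $i$, so $G \circ F_i = 0$ for all $i$, and by $\Bbbk$-linearity of the apolarity action
\[
    G \circ F = G \circ \Bigl(\sum_{i=1}^s F_i\Bigr) = \sum_{i=1}^s (G \circ F_i) = 0.
\]
Thus $G \in \Ann(F)$, which proves $I(Z) \subseteq \Ann(F)$; by \Cref{def:apolar_scheme}, $Z$ is apolar to $F$.
\end{proof}
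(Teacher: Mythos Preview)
Your proof is correct and follows essentially the same strategy as the paper: reduce to the local apolarity of each $Z_i$ to $F_i$ via \Cref{lemma:apolar_local}, then combine. The only cosmetic difference is that the paper phrases the combination step dually—showing $F \in I(Z)_d^\perp = \sum_i I(Z_i)_d^\perp$ and then invoking the Apolarity Lemma—whereas you argue the ideal containment $I(Z) = \bigcap_i I(Z_i) \subseteq \Ann(F)$ directly by linearity, which is the alternative you yourself mention and is arguably slightly more elementary.
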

\begin{proof}
    To ease notation, denote $F_i = L_i^{d-k_i}G_i$ in \cref{eq:GAD}. Let $I(Z)_d = I(Z) \cap \calR_d$ and let $I(Z)_d^\perp$ be the orthogonal space via the non-degenerate pairing $\calR_d \times \calS_d \to \Bbbk$ induced by derivation.
    Then,
    \[
	I(Z)^\perp_d = \left(I\left(Z_{F_1,L_1}\right)_d \cap \ldots \cap I\left(Z_{F_s,L_s}\right)_d\right)^\perp = I\left(Z_{F_1,L_1}\right)^\perp_d + \ldots + I\left(Z_{F_s,L_s}\right)_d^\perp,
    \]
 %    \[
	% I(Z)^\perp_d = \left(I\left(Z_{L_1^{d-k_1}G_1,L_1}\right)_d \cap \ldots \cap I\left(Z_{L_s^{d-k_s}G_s,L_s}\right)_d\right)^\perp = I\left(Z_{L_1^{d-k_1}G_1,L_1}\right)^\perp_d + \ldots + I\left(Z_{L_s^{d-k_s}G_s,L_s}\right)_d^\perp,
 %    \]
    see e.g. \cite[Proposition 2.6]{Ger96}.
    For every $i \in \{1,\ldots,s\}$ we have $F_i \in I\left(Z_{F_i,L_i}\right)_d^\perp$ %$L^{d-k_i}_iG_i \in I\left(Z_{L_i^{d-k_i}G_i,L_i}\right)_d^\perp$
    by \Cref{lemma:apolar_local}. Hence $F \in I(Z)_d^\perp$, which by \Cref{lemma:apolarity} implies that $I(Z) \se \Ann(F)$.
\end{proof}

The ideal defining schemes evinced by GADs can be easily computed by intersecting the ideals defining natural apolar schemes to local pieces of the additive decomposition, computed as in \Cref{alg:NaturalApolarScheme}. 

\iffalse
{\AB{Sono d'accordo col commento di Alessandro qui sotto. Questa frase che precede il mio commento qui, riassume già tutto l'algoritmo 2. Se non lo richiamaimo dopo io lo toglierei del tutto questo algoritmo qui sotto, se dopo lo richiamiamo basta che diamo a questa frase il valore di un Remark e teniamo lo stesso label dell'algoritmo.}}

\medskip

\begin{mdframed}
\begin{alg}[Scheme evinced by GAD] \todo{chiamarlo algoritmo mi sembra fin troppo, sinceramente... }
\label{alg:GAD}
\end{alg}
\vspace{0.1cm}
\noindent\textbf{Input:} Linear forms $L_1, \dots, L_s \in \calS_1$, forms $G_1, \dots, G_s \in \calS_{k_i}$, an integer $d\geq k_i$. \\
    \textbf{Output:} The ideal defining the scheme evinced by the GAD as in \Cref{def:GAD}.
\begin{enumerate}
    \item For every $i \in \{1, \dots, s\}$ compute $I_i = I\big(Z_{L_i^{d-k_i}G_i,L_i}\big)$ with \Cref{alg:NaturalApolarScheme}.
    \item Return $I = \cap_{i = 1}^s I_i$.
\end{enumerate}
\end{mdframed} 
\fi

\subsection{Examples} \label{app:Examples} Here we illustrate the above construction with some examples.

\begin{example}\label{ex:natural} 
    Let $F = (X_0+3X_1-2X_2)(X_1+X_2)X_2 \in \calS_3$ and $L = X_0+3X_1-2X_2 \in \calS_1$.
    Following \Cref{alg:NaturalApolarScheme} we obtain $\tilde F = X_0X_1X_2 + X_0X_2^2 \in \calS$ by %replacing $X_0$ with $X_0-3X_1+2X_2$ (and keeping $X_1$ and $X_2$ unchanged).
\[
	X_0 \leftarrow X_0-3X_1+2X_2. %, \quad X_1 \leftarrow X_1, \quad X_2 \leftarrow X_2
\]
In divided powers it becomes $\tilde F_{\rm dp} = X_0X_1X_2 + 2X_0X_2^{[2]}$, whose de-homogenization by $X_0 = 1$ is equal to $f_{L} = x_1x_2+2x_2^{[2]} \in \calSa$.
Since $f_{L}$ cannot be written with only one variable, by \Cref{rmk:degf} we consider the truncation of the Hankel matrix in degree $2 = \deg ( f_{L} )$, i.e.,
\[
    H^{2}(f_{L}) = \begin{blockarray}{c c c c c c c c c c c}
	& 1 & y_1 & y_2 & y_1^2 & y_1y_2 & y_2^2  \\
    \begin{block}{c (c c c c c c c c c c)}
     1 & 0 & 0 & 0 & 0 & 1 & 2 \\
     x_1 & 0 & 0 & 1 & 0 & 0 & 0  \\
     x_2 & 0 & 1 & 2 & 0 & 0 & 0  \\
     x_1^2 & 0 & 0 & 0 & 0 & 0 & 0  \\
     x_1x_2 & 1 & 0 & 0 & 0 & 0 & 0  \\
     x_2^2 & 2 & 0 & 0 & 0 & 0 & 0  \\
    \end{block}
    \end{blockarray} \ ,
\]
whose kernel defines the ideal $\Ann^{\contract}(f_{L}) = \big( y_2(2y_1-y_2), y_1^2 \big) \subset \calRa$.
Its homogenization in $\calR$ is the ideal $\big( Y_2(2Y_1-Y_2), Y_1^2 \big)$, which we need to base-change as in \cref{eq:Ychange}, i.e.
\[
	Y_1 \leftarrow -3Y_0 + Y_1, \quad Y_2 \leftarrow 2Y_0 + Y_2.
\] 
This way we obtain the ideal $I = \left( (2Y_0+Y_2)(8Y_0-2Y_1+Y_2), (3Y_0-Y_1)^2 \right) \subset \calR$.
Its radical ideal is $( 2Y_1+3Y_2, 2Y_0+Y_2 )$, i.e. it defines a $0$-dimensional scheme supported at the point $[L] = [X_1+3X_2-2X_3] \in \Pn$.
One can directly verify that this scheme has length $4$, either by computing the dimension of the space of partials of $f_L$ or by looking at the Hilbert polynomial.
Furthermore, it is apolar to $F$, as we have
\begin{align*}
			(2Y_0+Y_2)(8Y_0-2Y_1+Y_2) \circ F & = (-16Y_0^{2}+4Y_0Y_1-10Y_0Y_2+2Y_1Y_2-Y_2^{2}) \circ F = 0, \\
			(3Y_0-Y_1)^2 \circ F & = (9Y_0^2 - 6Y_0Y_1 + Y_1^2) \circ F = 0.
\end{align*}
Hence, $I$ is the ideal defining $Z_{F,L}$. \exampleend
\end{example}

%Natural apolar schemes to the same form, but supported on different points, may be completely different.

\begin{example}\label{ex:bad} 
    Let $F = (X_0+3X_1-2X_2)(X_1+X_2)X_2 \in \calS_3$ the same polynomial of \Cref{ex:natural} and consider $L = X_0 \in \calS_1$.
    As the support is $X_0$, we do not need to change coordinates, so we directly de-homogenize $F_{\rm dp}$ with respect to $L$, obtaining $f_{L} = x_1x_2 + 2x_2^{[2]} + 6x_1^{[2]}x_2 + 2x_1x_2^{[2]} - 12x_2^{[3]} \in \calSa$.
    Since $f_{L}$ cannot be regarded as univariate polynomial, we consider the truncation of the Hankel matrix in degree $3 = \deg(f_L)$, namely
\[
    H^{3}(f_{L}) = \begin{blockarray}{c c c c c c c c c c c c c c c}
	& 1 & y_1 & y_2 & y_1^2 & y_1y_2 & y_2^2 & y_1^3 & y_1^2y_2 & y_1y_2^2 & y_2^3 \\
    \begin{block}{c (c c c c c c c c c c c c c c)}
      1 &       0  & 0  & 0  & 0  & 1  & 2  & 0  & 6  & 2 &-12 \\
      x_1 &     0  & 0  & 1  & 0  & 6  & 2  & 0  & 0  & 0  & 0 \\
      x_2 &     0  & 1  & 2  & 6  & 2 &-12  & 0  & 0  & 0  & 0 \\
      x_1^2 &   0  & 0  & 6  & 0  & 0  & 0  & 0  & 0  & 0  & 0 \\
      x_1x_2 &  1  & 6  & 2  & 0  & 0  & 0  & 0  & 0  & 0  & 0 \\
      x_2^2 &   2  & 2 &-12  & 0  & 0  & 0  & 0  & 0  & 0  & 0 \\
      x_1^3 &   0  & 0  & 0  & 0  & 0  & 0  & 0  & 0  & 0  & 0 \\
      x_1^2x_2 &6  & 0  & 0  & 0  & 0  & 0  & 0  & 0  & 0  & 0 \\
      x_1x_2^2 &2  & 0  & 0  & 0  & 0  & 0  & 0  & 0  & 0  & 0 \\
      x_2^3 &   -12  & 0  & 0  & 0  & 0  & 0  & 0  & 0  & 0  & 0 \\
    \end{block}
    \end{blockarray} \ .
\]
Its kernel is given by the ideal
\[
    \Ann^{\contract}(f_{L}) = ( 5y_2^3 + 76y_1^2 - 12y_1y_2 + 36y_2^2,
    2 y_1^2y_2 + y_2^3, 
    y_1^3,
    6y_1y_2^2 + y_2^3 ) \subset \calRa.
\]
To homogenize it, we compute a Gröbner basis with respect to the graded lexicographic ordering:
\[
    \Ann^{\contract}(f_{L}) = ( y_1^3,
    5y_1^2y_2 - 38y_1^2 + 6y_1y_2 - 18y_2^2,
    15y_1y_2^2 - 38y_1^2 + 6y_1y_2 - 18y_2^2,
    5y_2^3 + 76y_1^2 - 12y_1y_2 + 36y_2^2 ).
\]
Hence the natural apolar scheme $Z_{F,L}$ is defined by the ideal
\begin{equation*}
    \left( %\begin{multlined} 
    Y_1^3, 2 Y_1^2 Y_2 + Y_2^3, 6 Y_1^2 Y_2 + Y_2^3,
    76 Y_0 Y_1^2 - 12 Y_0 Y_1 Y_2 + 36 Y_0 Y_2^2 + 5 Y_2^3  
        %Y_1^3, 5Y_1^2Y_2 - 38Y_0Y_1^2 + 6Y_0Y_1Y_2 - 18Y_0Y_2^2, \\
        %15Y_1Y_2^2 - 38Y_0Y_1^2 + 6Y_0Y_1Y_2 - 18Y_0Y_2^2,  5Y_2^3 + 76Y_0Y_1^2 - 12Y_0Y_1Y_2 + 36Y_0Y_2^2 
    %\end{multlined} 
    \right)
    \subset \calR.
\end{equation*}
One can easily verify that this ideal indeed defines a $0$-dimensional scheme apolar to $F$ and supported at $[X_0]$, whose length is $6$.
\exampleend
\end{example}

\begin{example} \label{ex:gadcomputation}
    Let $F = (X_0+3X_1-2X_2)(X_1+X_2)X_2 \in \calS_3$ be the polynomial of \Cref{ex:natural}.
    From the equality $(X_1 + X_2)X_2 = (\frac{X_1}{2}+X_2)^2-(\frac{X_1}{2})^2$ we immediately get another non-local GAD of $F$, namely
    \begin{equation}\label{eq:gad2}
	F = \left(\frac{X_1}{2}+X_2\right)^2 (X_0+3X_1-2X_2)-\left(\frac{X_1}{2}\right)^2 (X_0+3X_1-2X_2).
    \end{equation}
    We compute the scheme $Z$ evinced by the above GAD, supported at $[X_1+2X_2]$ and $[X_1]$.
 
    We begin with the first addendum $F_1 = \frac{1}{4}(X_1+2X_2)^2(X_0+3X_1-2X_2)$ and $L_1 = X_1+2X_2$.
    We can neglect the constant factor $\frac{1}{4}$, and since $L_1$ has no $X_0$ terms, we simply switch the roles of $X_0$ and $X_1$.
    In order to de-homogenize with respect to $L_1$, we perform the substitution
	\[
	       \quad X_1 \leftarrow X_1-2X_2,
	\]
    and we get $(f_1)_{L_1} = x_0+3-8x_2$.
    This polynomial can be written with one linear variable, as it is linear, so by \Cref{rmk:degf} we need to consider the truncation of the Hankel matrix in degree $2 = \deg\big(( f_1)_{L_1}\big) + 1$, i.e.
	\[
    H^2 \big( (f_1)_{L_1} \big) = \begin{blockarray}{c c c c c c c }
	& 1 & y_0 & y_2 & y_0^2 & y_0y_2 & y_2^2  \\
    \begin{block}{c (c c c c c c)}
     1 & 3 & 1 & -8 & 0 & 0 & 0  \\
     x_0 & 1 & 0 & 0 & 0 & 0 & 0  \\
     x_2 & -8 & 0 & 0 & 0 & 0 & 0  \\
    \end{block}
    \end{blockarray}\,,
    \]
    whose kernel defines the ideal $( 8y_0+y_2,y_2^2 ) \subset \calRa$.
    After the homogenization and the base-change
    \[
	Y_2 \leftarrow -2Y_1+Y_2,
    \]	
    we obtain the ideal $\left( 8Y_0-2Y_1+Y_2, (2Y_1-Y_2)^2 \right) \subset \calR$ defining the scheme $Z_1 = Z_{F_1,X_1+2X_2}$, which is $0$-dimensional, of length $2$ and supported at the point $[X_1+2X_2] \in \Pn$.
    
    We proceed with the second addendum $F_2 = \frac{1}{4}X_1^2(X_0+3X_1-2X_2)$ and $L_2 = X_1$.
    As above, $X_1$ plays the role of $X_0$.
    Since $(f_2)_{L_2} = x_0+3-2x_2$, we again consider the truncation of the Hankel matrix in degree $2$:
    \[
    H^2 \big( (f_2)_{L_2} \big) = \begin{blockarray}{c c c c c c c }
	& 1 & y_0 & y_2 & y_0^2 & y_0y_2 & y_2^2  \\
    \begin{block}{c (c c c c c c)}
     1 & 3 & 1 & -2 & 0 & 0 & 0  \\
     x_0 & 1 & 0 & 0 & 0 & 0 & 0  \\
     x_2 & -2 & 0 & 0 & 0 & 0 & 0  \\
    \end{block}
    \end{blockarray}\,,
    \]
    whose kernel defines the ideal $( 2y_0+y_2,y_2^2 ) \subset \calRa$.
    Hence, the scheme $Z_2 = Z_{F_2,X_1}$ is defined by the ideal $( 2Y_0+Y_2,Y_2^2 ) \subset \calR$, it is $0$-dimensional of length $2$ and is supported at the point $[X_1] \in \Pn$.
    In conclusion, the GAD of \cref{eq:gad2} evinces the scheme $Z = Z_1 \cup Z_2$ defined by
    \[
	    I(Z) = \big( 8Y_0-2Y_1+Y_2, (2Y_1-Y_2)^2 \big) \cap ( 2Y_0+Y_2,Y_2^2 ) = ( 4Y_0Y_1-10Y_0Y_2+2Y_1Y_2-Y_2^2, Y_0^2 ).
    \]
    One can directly check that $Z$ has length $4$, it is supported at the points $[X_1]$ and $[X_1+2X_2]$, and it is apolar to $F$.%, since
    %\begin{align*}
	%		(4Y_0Y_1-10Y_0Y_2+2Y_1Y_2-Y_2^2 )\circ F &= 0, \\
	%		Y_0^2 \circ F & = 0. 
    %\end{align*}
    \exampleend
\end{example}

%% ------------------------------------------------------------------

\section{GADs and Irredundant schemes} \label{sec:irred}

%% ------------------------------------------------

In this section, we investigate irredundant schemes evinced by GADs by employing ideas on natural apolar schemes from \cite{BJMK18:Polynomials}.

\begin{remark}\label{rmk:localGAD1}
    Let $[L] \in \Pn$ be a simple point defined by the ideal $\wp_L \subset \calR$. Recall that the \textit{$j$-fat point supported at $[L]$} is the $0$-dimensional scheme defined by the ideal $\wp_L^j$
    For any $k \leq d$, the natural apolar scheme of $F = L^{d-k}G \in \calS_d$ supported at $[L]$ is contained in the $(k+1)$-fat point supported at $[L]$, since the localization $f_{L}$ has degree at most $k$.
    Thus, $Z = Z_{F,L}$ is $k$-regular, as a $(k+1)$-fat point is always $k$-regular and the containment preserves regularity \cite{BCGI07:Osculating, Bernardi2009982}.
    Finally, if $F$ is \textit{concise} in $n+1$ variables, i.e., $\HF_Z(1) = n+1$, then $Z$ is regular in degree $\len(Z)-n$ since its Hilbert function starts with $\HF_Z = (1,n+1,\dots)$ and is strictly increasing until it stabilizes to $\len(Z)$.
\end{remark}

\begin{remark}\label{rmk:BJMRlemmas}
    By \cite[Lemma 3]{BJMK18:Polynomials}, given a local scheme $Z \subset \Pn$ apolar to $F \in \calS_d$ and supported at $[L]$, there exists $G \in \calS_D$ ($D \geq d$) such that $Z_{G,L} \se Z$ and $F = H \circ G$ for some $H \in \calR_{D-d}$.
    Furthermore, in \cite[Proposition 1]{BJMK18:Polynomials} it is shown that, under minimality assumption, the localizations of $F_{\rm dp}$ and $G_{\rm dp}$ with respect to $L$ are equal up to degree $d$.
    In that result, the minimality requirement is in terms of minimal length among the schemes supported at $[L]$ and apolar to $F$.
    However, we observe that in that proof irredundancy is actually enough.
    For the sake of completeness, we report here the proof of the following statement, which may be seen as a non-local version of \cite[Proposition 1]{BJMK18:Polynomials}. %{\AB{(Questo commento lo lascerei perché mostra che già la vecchia Prop. non ha bisogno della minimalità. (magari la prima parte di questo remark potrebbe essere tagliata?) Ma appunto per questo riporteri la dim della nostra Prop qui successiva a come era prima che non usava le ipotesi questo remark)}} 
\end{remark}
%Following these ideas and the structure of the proof of \cite[Proposition 1]{BJMK18:Polynomials}, for sake of completeness, we formulate a characterization for irredundant schemes evinced by GADs. 

\begin{proposition} \label{prop:ContainsGAD}
Let $Z$ be a $0$-dimensional scheme apolar and irredundant to $F \in \calS_d$.
Then $Z$ is evinced by a GAD of $F$ if and only if there are $L_1,\dots,L_s \in \calS_1$ such that $I(Z) \supseteq \bigcap_{i=1}^s \wp_{L_i}^{d+1}$.
\end{proposition}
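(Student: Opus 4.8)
The plan is to prove both implications of the equivalence, with the forward direction being essentially immediate from the construction and the reverse direction requiring the main work.

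For the forward direction, suppose $Z$ is evinced by a GAD $F = \sum_{i=1}^s L_i^{d-k_i}G_i$. By \Cref{def:GADscheme}, $Z = \bigcup_{i=1}^s Z_{L_i^{d-k_i}G_i,L_i}$, so $I(Z) = \bigcap_{i=1}^s I(Z_{L_i^{d-k_i}G_i,L_i})$. By \Cref{rmk:localGAD1}, each $Z_{L_i^{d-k_i}G_i,L_i}$ is contained in the $(k_i+1)$-fat point supported at $[L_i]$, i.e. $I(Z_{L_i^{d-k_i}G_i,L_i}) \supseteq \wp_{L_i}^{k_i+1} \supseteq \wp_{L_i}^{d+1}$ since $k_i \leq d$. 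Intersecting over $i$ gives $I(Z) \supseteq \bigcap_{i=1}^s \wp_{L_i}^{d+1}$, as desired.

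For the reverse direction, assume $I(Z) \supseteq \bigcap_{i=1}^s \wp_{L_i}^{d+1}$ for some $L_1,\dots,L_s \in \calS_1$. First I would reduce to the local case: since $Z$ is $0$-dimensional, it decomposes as a disjoint union of its connected components $Z = \bigsqcup_j W_j$, each $W_j$ supported at a single point $[M_j]$; the containment forces each support point $[M_j]$ to be one of the $[L_i]$ (because $\bigcap \wp_{L_i}^{d+1}$ is supported exactly on $\{[L_1],\dots,[L_s]\}$, so any point not among them has $I(Z)$ containing a function nonzero there, preventing $W_j$ from being supported there), and forces $W_j \subseteq \wp_{M_j}^{d+1}$-fat point. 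Moreover $Z$ being irredundant to $F$ means (by the argument in \cite{BJMK18:Polynomials}, cf. \Cref{rmk:BJMRlemmas}) that each local component $W_j$ is irredundant to the "local part" of $F$ at $[M_j]$; more precisely, I would invoke \cite[Lemma 3]{BJMK18:Polynomials} to get, for each $j$, a form $G^{(j)} \in \calS_{D_j}$ with $D_j \geq d$, an $H_j \in \calR_{D_j - d}$ with $F = H_j \circ G^{(j)}$, and $Z_{G^{(j)},M_j} \subseteq W_j$; then the non-local version of \cite[Proposition 1]{BJMK18:Polynomials} stated in \Cref{rmk:BJMRlemmas} (whose proof the paper promises to include) shows that under irredundancy the localizations of $F_{\mathrm{dp}}$ and $G^{(j)}_{\mathrm{dp}}$ with respect to $M_j$ agree up to degree $d$, hence $Z_{F, M_j} = Z_{G^{(j)}, M_j}$ and by irredundancy $W_j = Z_{F,M_j}$.

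The point where the containment hypothesis gets used crucially is to show that $Z_{F,M_j}$ (equivalently the localization $f_{M_j}$ of $F$) really does "come from" a GAD summand of degree exactly $d$: since $W_j \subseteq \wp_{M_j}^{d+1}$-fat point, the dehomogenization $f_{M_j}$ has degree $\leq d$, so writing $k_j = \deg f_{M_j} \leq d$ and choosing (via the Macaulay correspondence / divided-power duality, after homogenizing $f_{M_j}$ to a form $\widehat{f}_j \in \calS_d$ using $M_j$) we can express $\widehat{f}_j = M_j^{d-k_j} G_j$ with $G_j \in \calS_{k_j}$ not divisible by $M_j$ and $Z_{M_j^{d-k_j}G_j, M_j} = Z_{F,M_j} = W_j$. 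Summing, $F = \sum_j H_j \circ G^{(j)}$ — but I must be careful here; the cleaner route is to observe directly that the $W_j$'s being the natural apolar schemes $Z_{F,M_j}$ means $F$ lies in $I(Z)^\perp_d = \sum_j (I W_j)^\perp_d$, and since each $(I W_j)^\perp_d$ is spanned by forms of the shape $M_j^{d-k_j}(\text{something in }\calS_{k_j})$, decomposing $F$ along this sum yields $F = \sum_j M_j^{d - k_j} G_j$ with the $G_j$ not divisible by $M_j$ (redundant terms where $L_i$ is not a support point of $Z$ simply do not appear, or appear with $G_i$ that can be absorbed). This is precisely a GAD of $F$ evincing $Z$.

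I expect the main obstacle to be the bookkeeping in the reverse direction: carefully justifying that the decomposition $F \in \sum_j (IW_j)_d^\perp$ can be taken with each piece genuinely of GAD type $M_j^{d-k_j}G_j$ with $L_j \nmid G_j$ and with the evinced natural scheme being exactly $W_j$ (not a proper subscheme) — this is where irredundancy of $Z$, the promised non-local analogue of \cite[Proposition 1]{BJMK18:Polynomials}, and the fat-point containment all have to be combined. The global-to-local reduction itself (disjointness of components, intersection of ideals corresponds to union of schemes, orthogonal complements add) is routine and cited from \cite[Proposition 2.6]{Ger96}.
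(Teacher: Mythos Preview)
Your forward direction is correct and matches the paper. The reverse direction, however, has a genuine gap in its first approach. You apply \cite[Lemma~3]{BJMK18:Polynomials} to each component $W_j$ together with the \emph{full} form $F$, writing $F = H_j \circ G^{(j)}$ and concluding $W_j = Z_{F,M_j}$. But $W_j$ alone is not apolar to $F$ (only the whole $Z$ is), so Lemma~3 does not apply to the pair $(W_j,F)$; and the natural apolar scheme $Z_{F,M_j}$ to the full $F$ at a single point is typically far larger than $W_j$. There is also a circularity: the ``non-local version of \cite[Proposition~1]{BJMK18:Polynomials}'' you invoke from \Cref{rmk:BJMRlemmas} is precisely the statement being proved.

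The paper fixes this by \emph{first} decomposing $F = \sum_i F_i$ with $F_i \in I(Z_i)^\perp_d$ (so each local $Z_i$ is genuinely apolar to its own piece $F_i$), and only then applying the local results to each pair $(Z_i,F_i)$: Lemma~3 gives $G_i$ with $Z_{G_i,L_i} \subseteq Z_i$ and $F_i = H_i \circ G_i$; the fat-point hypothesis bounds $\deg\big(h_i \contract (g_i)_{L_i}\big) \leq d$, forcing it to equal $(f_i)_{L_i}$; hence $Z_{F_i,L_i} \subseteq Z_{G_i,L_i} \subseteq Z_i$, and irredundancy of $Z_i$ to $F_i$ forces equality throughout. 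Your ``cleaner route'' does begin with the correct decomposition $F \in \sum_j I(W_j)^\perp_d$, but then simply asserts that the resulting GAD evinces exactly $Z$; that assertion is precisely the step that requires the argument above on the local pieces $F_i$, not on $F$ itself.
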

\proof Let $Z = Z_1 \cup \dots \cup Z_s$ be the irreducible decomposition of $Z$.

If $Z$ is evinced by a GAD as in \cref{eq:GAD}, then each $Z_i = Z_{L_i^{d-k_i}G_i}$ is contained in a $(k_i+1)$-fat point by \Cref{rmk:localGAD1}, hence $I(Z_i) \supseteq \wp_{L_i}^{k_i+1} \supseteq \wp_{L_i}^{d+1}$. 
Note that this implication does not need irredundancy.

Conversely, since $I(Z) \se \Ann(F)$, then we have
\[
    F \in I(Z)^\perp_d = I(Z_1)_d^\perp + \dots + I(Z_s)^\perp_d.
\]
Therefore, we have an additive decomposition $F = \sum_{i=1}^s F_i$ with $F_i \in I(Z_i)^\perp_d$.
By \Cref{rmk:BJMRlemmas} there are $G_i \in \calS_{D_i}$ and $H_i \in \calR_{D_i-d}$ such that $Z_{G_i,L_i} \se Z_i$ and $H_i \circ G_i = F_i$. 
By \cite[Lemma 3]{BJMK18:Polynomials} we know that $h_i \contract (g_i)_{L_i}$ and $(f_i)_{L_i}$ are equal up to degree $d$, but since $I(Z_i) \supseteq \wp_{L_i}^{d+1}$, the degree of the local generator $h_i \contract (g_i)_{L_i}$ is bounded by $d$, so it equals $(f_i)_{L_i}$.
Hence, we have
\[\Ann^{\contract}\big((f_i)_{L_i}\big) = \Ann^{\contract}\big(h_i \contract (g_i)_{L_i}\big) \supseteq \Ann^{\contract}\big((g_i)_{L_i}\big),\]
%and the same argument contained in the proof of \cite[Proposition~1]{BJMK18:Polynomials}
%\footnote{\AB{Secondo me giocarsela così non va tanto bene. Se fossi un referee obietterei e chiederei di riscriverla: andare a cercare un argomento dentro una proof non può essere usato come dimostrazione, oltretutto le ipotesi di \cite{BJMK18:Polynomials} sono più forti delle nostre. Io sono dell'opinione che si debba scrivere la proof semplicemente per quello che è. Cosa c'era che non andava bene come la avevamo scritta all'inizio?}}
therefore the natural apolar scheme $Z_{F_i,L_i}$ is contained in $Z_{G_i,L_i}$ and is apolar to $F_i$.
However, the scheme $Z_i$ needs to be irredundant to $F_i$, hence we conclude that $Z_{F_i,L_i} = Z_{G_i,L_i} = Z_i$.
Therefore $Z$ is the scheme evinced by the additive decomposition $\sum_{i=1}^s F_i$ supported at $L_1,\dots,L_s$.
\endproof

In the following example, we observe that even if $Z$ is evinced by a GAD of $F \in \calS_d$ and its components are contained in $(d+1)$-fat points, $Z$ may still be redundant to $F$.

\iffalse D: Ho messo un esempio più piccino che poi uso anche per far vedere che il risultato della sezione dopo è strict.
\begin{example}%[Schemes evinced by GADs might be redundant]
    Consider the GAD $F = X_0G_1 + X_1G_2 \in \calS_3$, where
    \[ G_1 = 5X_0^2 + 3X_0X_1 - 5X_0X_2 + X_1^2 - 2X_1X_2 , \quad G_2 = 3X_0^2 + 6X_0X_1 - 6X_0X_2 + 5X_1^2 + 8X_1X_2. \]
The scheme $Z$ evinced by such GAD is given by the ideal 
\[
    I(Z) = \big( Y_2^2, Y_0Y_1(Y_0Y_1+Y_0Y_2+Y_1Y_2) \big) \subset \calR.
\]
One can directly verify that $I(Z) \supset \wp_{X_0}^3 \cap \wp_{X_1}^3$, and its Hilbert function is $\HF_Z = (1,3,5,7,8,8,-)$, hence it is not $3$-regular.
We use the summands of $G_1$ containing $X_1$ as factor to obtain a different GAD supported at the same points: $F = X_0^2 \tilde G_1 + X_1 \tilde G_2$, where
\[ \tilde G_1 =  5X_0 + 2X_1 - 5X_2, \quad \tilde G_2 = 4X_0^2 + 7X_0X_1 - 8X_0X_2 + 5X_1^2 + 8X_1X_2. \]
The scheme $Y$ evinced by the last GAD is by construction apolar to $F$, and it is defined by 
\[
    I(Y) = \left( Y_2^2, Y_0\left(Y_0Y_1 + \frac{2}{5}Y_0Y_2 + Y_1Y_2\right) \right) \subset \calR.
\]
Note that the Hilbert function of $Y$ is $\HF_Y = (1,3,5,6,6,-)$, so $Y \neq Z$.
However, since
\[
   Y_0Y_1(Y_0Y_1+Y_0Y_2+Y_1Y_2) \in I(Y)
   %A = \left(Y_0 + \frac{3}{5}Y_2\right)B - \frac{3}{5} \left(\frac{2}{5}Y_0^2+Y_0Y_1\right)Y_2^2,
\]
we deduce that $I(Z) \subsetneq I(Y) \subseteq \Ann(F)$. Hence $Y \subsetneq Z$ is apolar to $F$, i.e. $Z$ is redundant.         
\exampleend
\end{example}
\fi

\begin{example}%[Schemes evinced by GADs might be redundant] 
\label{ex:irredbutnotmin}
    Consider the GAD $F = X_0G_1 + X_1^2G_2 \in \calS_3$, where
    \[ G_1 = 4 X_0^2 + 2 X_0 X_1 - 4 X_1^2 , \quad G_2 = - 3 X_0 - 5 X_1. \]
The scheme $Z$ evinced by such GAD is given by the ideal 
\[
    I(Z) = ( Y_0^2 Y_1^3 ) = \wp_{X_0}^3 \cap \wp_{X_1}^2 \subset \calR.
\]
Its Hilbert function is $\HF_Z = (1,2,3,4,5,5,\dots)$, hence it is not $3$-regular.
We move the addendum in $G_1$ containing $X_1^2$ to $G_2$, obtaining a different GAD supported at the same points: $F = X_0^2 \tilde G_1 + X_1^2 \tilde G_2$, where
\[ \tilde G_1 =  4 X_0+2 X_1, \quad \tilde G_2 = -7 X_0 - 5 X_1. \]
The scheme $\tilde Z$ evinced by the last GAD is by construction apolar to $F$, and it is defined by 
\[
    I(\tilde Z) = ( Y_0^2 Y_1^2 ) = \wp_{X_0}^2 \cap \wp_{X_1}^2 \subset \calR.
\]
The Hilbert function of $\tilde Z \neq Z$ is $\HF_{\tilde Z} = (1,2,3,4,4,\dots)$, and clearly $I(Z) \subsetneq I(\tilde Z) \se \Ann(F)$.
Hence, $Z$ is redundant.

It can be directly verified that $\tilde Z$ is irredundant to $F$ (e.g. as in \Cref{ex:perazzo}), but not minimal, since
\[ I_W = ( 79 Y_0^2 - 166 Y_0Y_1 + 88 Y_1^2 ) \subset \calR \]
is evinced by the (unique) Waring decomposition of $F$, defining a scheme of length $2$ apolar to $F$.
\exampleend
\end{example}

%As a direct consequence of \Cref{prop:irredundant&small_thenGAD}, we have that if a scheme apolar to $F \in \calS_d$ is contained in a union of $(d+1)$-fat points, then it contains a scheme evincing a GAD of the same polynomial. 
\begin{corollary}\label{GAD:inside:fat:points}
    Let $L_1,\ldots,L_s \in \calS_1$ and $Z = Z_1 \cup \ldots \cup Z_s$ be a $0$-dimensional scheme apolar to $F \in \calS_d$ such that for every $i \in \{1, \dots, s\}$ we have $I(Z_i) \supset \wp_{L_i}^{\tilde k_i+1}$ with $\tilde k_i \leq d$.
    Then $Z$ contains a scheme evinced by a GAD of $F$ as in \cref{eq:GAD}, with $k_i \leq \tilde k_i$.
\end{corollary}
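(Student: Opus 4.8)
The plan is to reproduce the ``conversely'' half of the proof of \Cref{prop:ContainsGAD} almost verbatim. There, the hypothesis $I(Z_i)\supseteq\wp_{L_i}^{d+1}$ forces the relevant local generators to have degree $\le d$; here the stronger hypothesis $I(Z_i)\supseteq\wp_{L_i}^{\tilde k_i+1}$ forces them to have degree $\le\tilde k_i$, which is precisely what yields the bound $k_i\le\tilde k_i$. The irredundancy assumption, used in \Cref{prop:ContainsGAD} only to upgrade the inclusions $Z_{F_i,L_i}\subseteq Z_i$ to equalities, is simply dropped, so one ends up with a containment rather than an equality of schemes.

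Concretely, I would write the irreducible decomposition $Z = Z_1\cup\dots\cup Z_s$, so that $I(Z)=\bigcap_i I(Z_i)$ since the $Z_i$ lie over the distinct points $[L_i]$. As $Z$ is apolar to $F$, the Apolarity Lemma together with \cite[Proposition 2.6]{Ger96} gives $F\in I(Z)_d^\perp=\sum_i I(Z_i)_d^\perp$, hence an additive decomposition $F=\sum_i F_i$ with $F_i\in I(Z_i)_d^\perp$. From $I(Z_i)\supseteq\wp_{L_i}^{\tilde k_i+1}$ and $\tilde k_i\le d$ one gets $F_i\in[\wp_{L_i}^{\tilde k_i+1}]_d^\perp = L_i^{\,d-\tilde k_i}\calS_{\tilde k_i}$, the standard description of the degree-$d$ part of the inverse system of a fat point (see, e.g., \cite{IaKa:book}). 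Discarding the indices with $F_i=0$, for each remaining one I would write $F_i=L_i^{\,d-\tilde k_i}G_i'$ with $0\ne G_i'\in\calS_{\tilde k_i}$, factor out the top power $L_i^{m_i}$ of $L_i$ dividing $G_i'$, and set $G_i=G_i'/L_i^{m_i}$, $k_i=\tilde k_i-m_i$. Then $F_i=L_i^{\,d-k_i}G_i$ with $L_i\nmid G_i$ and $0\le k_i\le\tilde k_i\le d$, and since the $[L_i]$ are distinct, $F=\sum_i L_i^{\,d-k_i}G_i$ is a GAD of $F$ as in \cref{eq:GAD} with $k_i\le\tilde k_i$. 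Let $W=\bigcup_i Z_{F_i,L_i}$ be the scheme it evinces.

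It then remains to prove $W\subseteq Z$, which — the components being supported at distinct points — reduces to $Z_{F_i,L_i}\subseteq Z_i$ for each remaining $i$. By the Apolarity Lemma again, $F_i\in I(Z_i)_d^\perp$ means $Z_i$ is apolar to $F_i$, so \Cref{rmk:BJMRlemmas} (i.e.\ \cite[Lemma 3]{BJMK18:Polynomials}) provides $\hat G_i\in\calS_{D_i}$ with $D_i\ge d$ and $H_i\in\calR_{D_i-d}$ such that $Z_{\hat G_i,L_i}\subseteq Z_i$, $H_i\circ\hat G_i=F_i$, and $(f_i)_{L_i}$ agrees with $h_i\contract(\hat g_i)_{L_i}$ up to degree $d$. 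Now $Z_{\hat G_i,L_i}\subseteq Z_i$ and $\wp_{L_i}^{\tilde k_i+1}\subseteq I(Z_i)$ give $\frakm^{\tilde k_i+1}\subseteq\Ann^{\contract}\big((\hat g_i)_{L_i}\big)$, so $\deg(\hat g_i)_{L_i}\le\tilde k_i\le d$; since contraction does not raise degree and $\deg (f_i)_{L_i}\le d$ as well, the two localizations are in fact equal. Hence
\[
\underline{I}(Z_{F_i,L_i})=\Ann^{\contract}\big((f_i)_{L_i}\big)=\Ann^{\contract}\big(h_i\contract(\hat g_i)_{L_i}\big)\supseteq\Ann^{\contract}\big((\hat g_i)_{L_i}\big)=\underline{I}(Z_{\hat G_i,L_i})\supseteq\underline{I}(Z_i),
\]
i.e.\ $Z_{F_i,L_i}\subseteq Z_i$, and therefore $W=\bigcup_i Z_{F_i,L_i}\subseteq\bigcup_i Z_i\subseteq Z$.

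The only genuinely load-bearing step is the degree estimate $\deg(\hat g_i)_{L_i}\le\tilde k_i$ extracted from the fat-point hypothesis: it is what promotes the ``degree-$d$ agreement'' of \cite[Lemma 3]{BJMK18:Polynomials} to an honest equality of local generators and, simultaneously, produces the bound $k_i\le\tilde k_i$. Everything else is bookkeeping already carried out for \Cref{prop:ContainsGAD}, so I expect no further obstruction.
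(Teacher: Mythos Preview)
Your argument is correct and uses the same machinery as the paper, with a small organizational twist. The paper first shrinks $Z$ to an irredundant subscheme $Y\subseteq Z$ (with $Y_i\subseteq Z_i$, hence still $I(Y_i)\supseteq\wp_{L_i}^{\tilde k_i+1}$) and then applies \Cref{prop:ContainsGAD} to $Y$ as a black box: irredundancy upgrades the containments $Z_{F_i,L_i}\subseteq Y_i$ to equalities, so $Y$ itself is the scheme evinced by the GAD, and $Y\subseteq Z$ finishes. You instead keep $Z$ and rerun the proof of \Cref{prop:ContainsGAD}, correctly observing that irredundancy is invoked only at the very last step to turn $Z_{F_i,L_i}\subseteq Z_i$ into an equality---a step you do not need. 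The degree bound $\deg(\hat g_i)_{L_i}\le\tilde k_i$ you extract from the fat-point hypothesis is exactly the mechanism the paper uses (phrased there as $I(Y_i)_d^\perp\subseteq(\wp_{L_i}^{\tilde k_i+1})_d^\perp=\langle L_i^{d-\tilde k_i}Q:Q\in\calS_{\tilde k_i}\rangle$). Either route works; the paper's is slightly more economical in citing the proposition directly, yours avoids the preliminary reduction to an irredundant subscheme.
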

\proof
    Let $Y = Y_1 \cup \ldots \cup Y_s \subseteq Z$ be non-redundant and apolar to $F$, with $Y_i \subseteq Z_i$. Then, it is enough to apply the proof of \Cref{prop:ContainsGAD} to $Y$, since 
    \[ I(Y_i)^\perp_d \subseteq I(Z_i)^\perp_d \subseteq (\wp_i^{\tilde k_i+1})_d^\perp = \langle L_i^{d-{\tilde k_i}}Q ~:~ Q \in \calS_{\tilde k_i}\rangle, \]
    where the last equality is a classical result, see e.g. \cite[Theorem 3.2]{Ger96}.
    %\cite{Segre,Ger96, BF,BCGI07:Osculating, Bernardi2009982}.
    We conclude that $I(Y_i)$ is evinced by $F_i = L_i^{d-{\tilde k_i}} Q_i$, which becomes a valid (local) GAD after collecting all the factors $L_i$ in $Q_i$.
    Thus, $Y$ is evinced by the GAD $F = \sum_{i=1}^s F_i$ supported at $L_1, \dots, L_s$.
\endproof

In the following example, we show that the degree $D$ of the polynomial $G$ from \Cref{rmk:BJMRlemmas} may well exceed $d = \deg(F)$. We thank J. Buczyński for pointing it out.
\begin{example}\label{ex:GAD_nonContained}
     Consider the following polynomial:
    \begin{align*}
        F = 24\,X_{0}^{3}& + 70\,X_{0}^{2}X_{1} + 75\,X_{0}^{2}X_{2} + 70\,X_{0}^{2}X_{3} + 180\,X_{0}^{2}X_{4} +10\,X_{0}^{2}X_{5} + 10\,X_{0}X_{1}^{2} \\
        & + 70\,X_{0}X_{2}^{2}+ 360\,X_{0}X_{2}X_{3} + 120\,X_{0}X_{2}X_{4} + 60\,X_{0}X_{3}^{2} + 60\,X_{2}^{3} + 60\,X_{2}^{2}X_{3} \in \calS_3,
    \end{align*}
        and let $Z$ be the scheme defined by the ideal
        \begin{align*}
            I(Z) = ( -Y_0Y_3 + Y_2^2, \, &-Y_1Y_4 + Y_2Y_3, \, -Y_1Y_5 + Y_1^2, \, -6Y_1Y_5 + Y_2Y_4, \, -6Y_1Y_5 + Y_3^2, \\
            &Y_1Y_2, \, Y_1Y_3, \, Y_1Y_4, \, Y_1Y_5, \, Y_2Y_5, \, Y_3Y_4, \, Y_3Y_5, \, Y_4^2, \, Y_4Y_5, \, Y_5^2 ) \subset \calR.
        \end{align*}
        One can computationally check that $Z$ is a local $0$-dimensional
        scheme apolar to $F$, of minimal length $6$ and supported at $[X_0] \in \Pn$.
        One can also verify that it is the unique scheme of minimal length apolar to such $F$, by explicitly computing minimal apolar schemes \cite{BT20:Waring}, or by observing that $I(Z) = \Ann(F) \cap \calR_{\leq 2}$ and the Hilbert function of $\calR/\Ann(F)$ is $(1,6,6,1)$. In particular, $Z$ is non-redundant.
        Since $I(Z) \not\supseteq \wp_{X_0}^4$, by \Cref{prop:ContainsGAD} there is no GAD of $F$ that evinces this apolar scheme.
        However, as recalled in \Cref{rmk:BJMRlemmas}, since $I(Z) \supseteq \wp_{X_0}^5$ then $Z$ is evinced by a GAD of a degree-$4$ polynomial $G$ having $F$ among its partials. Indeed, let us consider the polynomial
        \begin{align*}
            G = \ &6 X_{0}^{4} + \frac{70}{3}X_{0}^{3}X_{1} + 25 X_{0}^{3}X_{2} + \frac{70}{3}X_{0}^{3}X_{3} + 60 X_{0}^{3}X_{4} + \frac{10}{3}X_{0}^{3}X_{5} + 5 X_{0}^{2}X_{1}^{2} + 35 X_{0}^{2}X_{2}^{2}  \\
            & + 180 X_{0}^{2}X_{2}X_{3} + 60 X_{0}^{2}X_{2}X_{4} + 30 X_{0}^{2}X_{3}^{2} + 60 X_{0}X_{2}^{3} + 60 X_{0}X_{2}^{2}X_{3} + 5 X_{2}^{4} \in \calS_4.
        \end{align*}
        Note that $Y_0 \circ G = F$. Moreover, $Z = Z_{G,X_0}$, i.e., it is evinced by the trivial GAD of $G$ given by $G = X_0^0G$. This example shows why the containment in $(d+1)$-fat points is crucial for \Cref{prop:ContainsGAD} and \Cref{GAD:inside:fat:points}. 
        In particular, we have that
        \begin{align*} g_{X_0} & = 120x_2^4 + f_{X_0} = \\
        & = 120 x_2^4 + 360 x_{2}^{3} + 120 x_{2}^{2}x_{3} + 20 x_{1}^{2} + 140 x_{2}^{2} + 360 x_{2}x_{3} + 120 x_{2}x_{4} + 120 x_{3}^{2} + 140 x_{1}\\
        &\quad  +150 x_{2}+140 x_{3}+360 x_{4}+20 x_{5}+144. \end{align*} 
       We observe that $g_{X_0}$ and $f_{X_0}$ are equal up to degree $3$, but since
       \[(y_2^2 - y_3) \contract f_{X_0} = -120 x_2^2 \neq 0,\]
       then $\Ann^{\contract}(g_{X_0}) \not\se \Ann^{\contract}(f_{X_0})$.
       \exampleend
    \end{example}

%% ------------------------------------------------------------------

\section{Regularity of schemes evicing GADs} \label{sec:regularity}

%% ------------------------------------------------------------------

\subsection{Apolar schemes with low multiplicities and independent supports}\label{ssec:lowmultiplicity_independent}

For a given $L \in \calS_1$, let $D_L = L^\perp \cap \calR_1$ and $D^e_L\subset\Sym^e\calR_1$ be its $e$-th symmetric power.
We also define the $\Bbbk$-vector spaces %Let  $\nu_d$ be the $d$-th Veronese embedding of $\bbP( \calS_1 )$ into $\bbP( \calS_d )$ and
\[
	\calD^e_L(F) = \langle H \circ F ~:~ H \in D^e_L\rangle \subseteq \calS_{d-e}, 
\]
and given a vector space $V \subseteq \calS_m$ and $H \in \calS_l$, we write
\[ 
    H \cdot V = \{HF ~:~ F \in V\} \se \calS_{l+m}.
\] 
With the notation of the previous sections, as in \cite[Remark 3]{BJMK18:Polynomials}, we have
\[
    I(Z_{F,L})_d^\perp = \left\langle\sum_{e = 0}^d L^e \cdot \calD^e_L(F)\right\rangle \subset \calS_d.
\]

When $F=L^{d-k}G$, from the above equality and the chain rule of derivation we get %$\bbP (L^{d-k}G)^{-1}_d = D: Non definito
\[ I(Z_{L^{d-k}G,L})_d^\perp = \left\langle\sum_{e = 0}^k L^{d-k+e} \cdot \calD^e_L(G)\right\rangle \subset \calS_d. \]

\begin{remark} \label{rmk:relation}
    Let $Z = \cup_{i = 1}^s Z_i \subset \Pn$ be the irreducible decomposition of a $0$-dimensional scheme.
    Then $Z$ is $h$-regular precisely when $\dim I(Z)_h^{\perp} = \deg (Z) = \sum_{i=1}^s \deg (Z_i)$, therefore there cannot be $\Bbbk$-linear relations involving generators of $I(Z_i)_h^{\perp}$ for different $i$'s.
\end{remark}

If there is a relation between different $I(Z_i)_d^{\perp}$ as in \Cref{rmk:relation}, the scheme $Z$ is not $d$-regular.
However, the following proposition shows that if such $Z$ is evinced by a GAD of $F\in \calS_d$ and is irredundant to it, such a relation cannot involve addenda appearing in that GAD.

\begin{proposition}\label{prop:externalconditions}
    Let $Z$ be the scheme evinced by the GAD $F=\sum_{i=1}^s L_i^{d-k_i}G_i \in \mathcal{S}_d$.
    If, for some $i \in \{1, \dots, s\}$, we have
\begin{equation}\label{smaller:inverse}
      L_i^{d-k_i}G_i \in \sum_{1 \leq e_i \leq k_i} L_i^{d-k_i+e_i} \cdot \mathcal{D}^{e_i}_{L_i}(G_i) + \sum_{\substack{1 \leq j \leq s \\ j \neq i}} \sum_{0 \leq e_j \leq k_j}  L_j^{d-k_j+e_j} \cdot \mathcal{D}^{e_j}_{L_j}(G_j), 
\end{equation}
    then $Z$ is redundant to $F$.
    It is intended that the first sum in \cref{smaller:inverse} is empty if $k_i=0$.
 \end{proposition}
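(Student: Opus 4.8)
The plan is to show that under hypothesis \eqref{smaller:inverse} one can produce a strictly smaller apolar scheme of $F$, which by definition means $Z$ is redundant. The key observation is that $Z$ fails to be $d$-regular exactly when there is a nontrivial linear relation among the spaces $I(Z_i)_d^\perp$ (this is \Cref{rmk:relation}), and \eqref{smaller:inverse} is precisely such a relation: it says that $L_i^{d-k_i}G_i$, which is the ``top'' generator of $I(Z_i)_d^\perp$ (recall $I(Z_{L_i^{d-k_i}G_i,L_i})_d^\perp = \bbP(\sum_{e=0}^{k_i} L_i^{d-k_i+e}\cdot \calD^{e}_{L_i}(G_i))$ and the $e=0$ summand is $\langle L_i^{d-k_i}G_i\rangle$), lies in the span of all the \emph{other} summands, including the higher-order summands $e_i\geq 1$ of its own local piece.

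First I would isolate what \eqref{smaller:inverse} gives us: since $L_i^{d-k_i}G_i \in I(Z_i)_d^\perp$ anyway and the right-hand side is contained in $\sum_{e_i\geq 1} L_i^{d-k_i+e_i}\cdot\calD^{e_i}_{L_i}(G_i) \;+\; \sum_{j\neq i} I(Z_j)_d^\perp$, we can rewrite $F = \sum_j L_j^{d-k_j}G_j$ by substituting for the $i$-th term, absorbing the $\sum_{e_i\geq 1}$ part into a genuine GAD summand supported at $L_i$ and the rest into the other $L_j$'s. Concretely, write $L_i^{d-k_i}G_i = P_i + \sum_{j\neq i} Q_j$ with $P_i \in \sum_{e_i=1}^{k_i} L_i^{d-k_i+e_i}\cdot\calD^{e_i}_{L_i}(G_i)$ and $Q_j \in I(Z_j)_d^\perp$. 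Then $F = P_i + \sum_{j\neq i}(L_j^{d-k_j}G_j + Q_j)$. Now $P_i$ is divisible by $L_i^{d-k_i+1}$, so — after collecting powers of $L_i$ — it is of the form $L_i^{d-k_i'}G_i'$ with $k_i' < k_i$ (or $P_i=0$, an even better scenario). Similarly $L_j^{d-k_j}G_j + Q_j \in I(Z_j)_d^\perp = (\wp_{L_j}^{k_j+1})_d^\perp$ reduced, so it is again of the form $L_j^{d-k_j}G_j'$ with the same or lower multiplicity. Thus we obtain a new GAD $F = \sum_j L_j^{d-k_j'}G_j'$ with $k_i' < k_i$.

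Next I would invoke \Cref{prop:ContainsGAD}/\Cref{GAD:inside:fat:points}: the scheme $Z'$ evinced by this new GAD has $I(Z') \supseteq \bigcap_j \wp_{L_j}^{k_j'+1} \supseteq \bigcap_j \wp_{L_j}^{k_j+1} = $ (essentially) $I(Z)$'s containment data, and more precisely $I(Z'_i) = \Ann^{\contract}((f_i')_{L_i})$ where $f_i'$ has degree $\le k_i' < k_i = \deg (f_i)_{L_i}$; hence locally at $[L_i]$, $Z'_i$ is strictly contained in $Z_i$ (its length is strictly smaller, as the Hilbert function of a fat-point-contained local scheme is controlled by the degree of its dehomogenized dual generator). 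Since $Z' = Z'_i \cup \bigcup_{j\neq i} Z'_j$ with $Z'_j \subseteq$ something apolar to the $j$-th new summand, and $Z'$ is apolar to $F$ by the lemma following \Cref{def:GADscheme}, we get $Z' \subsetneq Z$ apolar to $F$, contradicting irredundancy. The main subtlety — and where I would spend the most care — is the bookkeeping in the previous paragraph: verifying that after the substitution the terms genuinely reassemble into valid GAD summands (the $G_i'$ not divisible by $L_i$, which is automatic once one collects all $L_i$ factors), and that the resulting $Z'$ is honestly \emph{strictly} smaller rather than merely different — this is where one needs that dropping the degree of the local dual generator strictly drops the length of the local natural apolar scheme, together with \Cref{rmk:relation} to rule out the $Z'_j$ ($j\neq i$) growing to compensate.
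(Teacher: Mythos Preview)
Your overall strategy matches the paper's: substitute the relation \eqref{smaller:inverse} into the GAD, obtain a new GAD of $F$ whose $i$-th local piece has strictly smaller exponent, and show that the scheme $Z'$ evinced by this new GAD is a \emph{proper subscheme} of $Z$.

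There is, however, a genuine gap at the point you yourself flag as ``the main subtlety.'' You need $Z'_j \subseteq Z_j$ for every $j$ (including $j\neq i$), not merely $\len(Z') < \len(Z)$: redundancy requires a subscheme, and a shorter apolar scheme that is not contained in $Z$ says nothing about $Z$ being redundant. Your proposed tool for this, \Cref{rmk:relation}, is about $d$-regularity and linear relations among the spaces $I(Z_j)_d^\perp$; it does not control containment of the new local components in the old ones and cannot ``rule out the $Z'_j$ growing.'' Relatedly, the equality $I(Z_j)_d^\perp = (\wp_{L_j}^{k_j+1})_d^\perp$ you use is false in general (only the inclusion $\subseteq$ holds), so knowing the new $j$-th addendum lies in $(\wp_{L_j}^{k_j+1})_d^\perp$ is not enough to force its natural apolar scheme inside $Z_j$.

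The missing (and simple) argument is the one the paper gives: work locally. For each $j$, the new $j$-th addendum is $L_j^{d-k_j}G_j$ plus terms of the form $L_j^{d-k_j+e_j}(H_{j,e_j}\circ G_j)$ with $H_{j,e_j}\in D_{L_j}^{e_j}$. Dehomogenizing with respect to $L_j$, the new local dual generator is a $\Bbbk$-linear combination of contractions $h_{j,e_j}\contract g_j$ of the old generator $g_j$. Hence $\Ann^{\contract}(\text{new}) \supseteq \Ann^{\contract}(g_j)$, which is exactly $Z'_j \subseteq Z_j$. For $j=i$ the first sum in \eqref{smaller:inverse} starts at $e_i\geq 1$, so the degree of the new local generator is strictly less than $\deg g_i$; the socle degrees differ, giving $Z'_i \subsetneq Z_i$ and therefore $Z' \subsetneq Z$. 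Once you replace the appeal to \Cref{rmk:relation} with this annihilator-containment step, your plan becomes a complete proof identical in spirit to the paper's.
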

\begin{proof}
    Without loss of generality, we may assume that in \cref{smaller:inverse} we have $i = 1$. We define a scheme $Z'$ apolar to $F$ as follows.
    
    $\bullet$ If $k_1 = 0$, by \cref{smaller:inverse}, we simplify the GAD as $F = \sum_{j=2}^s L_j^{d-k_i}G'_j$ with $G'_j \in \sum_{e=0}^{k_i}L_j^e D^e_{L_j}(G_j)$. We call $Z'$ the scheme evinced by this GAD of $F$.
    
    $\bullet$ If $k_1 > 0$, we replace $L_1^{d-k_1}G_1$ in the GAD of $F$ with the linear combination deduced from \cref{smaller:inverse}.
    In particular, there are elements $H_{j,e_j} \in \mathcal{D}_{L_j}^{e_j}$ and integers $m_j \in \bbN$ such that we can write
    \begin{equation}\label{GAD2}
        F = 
        % L_1^{d-k_1+m_1}\cdot \sum_{0 < m_1 \leq e_1 \leq k_1-1} L_1^{e_1-m_1} \cdot (H_{1,e_1} \circ G_1) + 
        \sum_{j=1}^s L_j^{d-k_j+m_j} \left( \sum_{m_j \leq e_j \leq k_j} L_j^{e_j-m_j} \left(H_{j,e_j} \circ G_j\right)\right).
    \end{equation} 
    Since $k_1 > 0$, then we have $m_1 \geq 1$ in \cref{GAD2}.
    The last equation is a GAD of $F$ up to deleting vanishing addenda
    %the addenda for which $ \sum_{m_j \leq e_j \leq k_j} L_j^{e_j-m_j}\left(H_{j,e_j} \circ G_j\right) \equiv 0$
    and, for all the others, choosing $m_j$ such that $H_{j,m_j} \neq 0$. 
   % Remark that if $k_i=0$, then in the first sum $j$ runs from $2$ to $s$.
   Let $Z'$ be the scheme evinced by the new GAD in \cref{GAD2}. 
   
   By construction, $Z'$ is apolar to $F$, so it is sufficient to show that $Z' \subsetneq Z$.
   
% i.e.:
% \[
%     Z'= \bigcup_{j=1}^s V\left(\Big(\sum_{m_j \leq e_j \leq k_j}a_{e_j}\partial^{e_j}_{\hat j}g_j\Big)^\perp\right).
% \]
Following the notation introduced in \Cref{sec:NAS}, let $g_j = (g_j)_{L_j} \in \calSa$ be the de-homogenization of $(G_j)_{\rm dp}$ with respect to $L_j$, and let $h_{j,e_j} \in \calRa$ be the dehomogenization of $H_{j,e_j}$ with respect to the dual linear form $L_j^*$ of $L_j$.
Since $H_{j,e_j} \in D^{e_j}_{L_j} \subset {\rm Sym}^{e_j}L_j^\perp$, then $H_{j,e_j}$ does not involve $L_j^*$, so its dehomogenization $h_{j,e_j}$ is equal to $H_{j,e_j}$.
%obtained by simply rewriting $H_{j,e_j}$ with lowercase letters.
Thus, the de-homogenization of $(H_{j,e_j}\circ G_j)_{\rm dp} = H_{j,e_j} \contract (G_j)_{\rm dp}$ with respect to $L_j$ coincides with $h_{j,e_j} \contract g_j$.
In particular, the $j$-th component of $Z'$ is defined by $\Ann^{\contract}\big(\sum_{m_j \leq e_j \leq k_j} h_{j,e_j} \contract g_j\big)$.
Since
\[
    \Ann^{\contract}\left(\sum_{m_j \leq e_j \leq k_j} h_{j,e_j} \contract g_j\right) \supseteq \Ann^{\contract}(g_j),
\]
we deduce that $Z' \subseteq Z$. We now show that this containment is proper.

In the case $k_1 = 0$, then the containment is strict because $Z'$ has no support on $[L_1]$. In the case $k_1 > 0$, since $m_1 \geq 1$, $\deg (\sum_{m_i \leq e_i \leq k_i} h_{1,e_1} \contract g_1) < \deg (g_1)$ so the socle degree of the first component of $I(Z')$ and $I(Z)$ are different, so again they must be different.
\end{proof}

\begin{proposition} \label{prop:lowmultiplicity}
    Let $s>1$ and $L_1,\ldots,L_s \in \mathcal{S}_1$ be $\Bbbk$-linearly independent forms and $Z$ be the scheme evinced by a GAD of $F \in \calS_d$ as in \cref{eq:GAD}.
    If either
    \begin{enumerate}[label=(\alph*)]
        \item\label{prop:a} $d > \max_{i \neq j}\{k_i + k_j\}$, or
        \item\label{prop:b} $d > \max_{i \neq j}\{k_i+k_j-2\}$ and $Z$ is irredundant,
    \end{enumerate}
    then $Z$ is $d$-regular. 
\end{proposition}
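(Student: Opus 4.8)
The plan is to reduce the $d$-regularity of $Z$ to a direct-sum statement for the local inverse systems of its components, and then to prove that direct-sum statement by separating the summands one variable at a time. Write $Z=Z_1\cup\dots\cup Z_s$ with $Z_i=Z_{L_i^{d-k_i}G_i,L_i}$ supported at the distinct points $[L_i]$, and set $W_i:=I(Z_i)_d^\perp\subseteq\calS_d$. Since the $Z_i$ are pairwise disjoint we have $I(Z)_d=\bigcap_i I(Z_i)_d$, hence $I(Z)_d^\perp=\sum_{i=1}^s W_i$ and $\HF_Z(d)=\dim\sum_i W_i$. By \Cref{rmk:localGAD1} each $Z_i$ is $k_i$-regular, and as $k_i\le d$ (see \Cref{def:GAD}) this gives $\dim W_i=\len Z_i$. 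Therefore $\HF_Z(d)=\dim\sum_i W_i\le\sum_i\dim W_i=\len Z$, with equality --- equivalently $\reg Z\le d$ --- exactly when the $W_i$ form a direct sum, as in \Cref{rmk:relation}. Performing an invertible linear change of coordinates, which changes neither the Hilbert function of $Z$ nor the GAD structure (the whole construction is equivariant, cf.\ \Cref{lem:apolar_changecoordinates}), I may assume $L_i=X_{i-1}$ for $i=1,\dots,s$; then, as recalled in \Cref{ssec:lowmultiplicity_independent}, $W_i=\sum_{e=0}^{k_i}X_{i-1}^{d-k_i+e}\cdot\calD^e_{X_{i-1}}(G_i)\subseteq X_{i-1}^{d-k_i}\cdot\calS_{k_i}$.

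The combinatorial heart of the argument is the following elementary claim: \emph{if $a_1,\dots,a_s\ge 1$ and $q_1,\dots,q_s\in\calS$ are homogeneous with $\sum_{i=1}^s X_{i-1}^{a_i}q_i=0$, then $q_i\in(X_{j-1}^{a_j}:j\ne i)$ for every $i$; in particular, if $s\ge 2$ and $\deg q_i<a_j$ for all $j\ne i$, then $q_i=0$.} To prove it, fix $i$, expand every $q_\ell=\sum_{m\ge 0}q_\ell^{(m)}X_{i-1}^m$ with $q_\ell^{(m)}$ free of $X_{i-1}$, and read off the coefficient of $X_{i-1}^{a_i+m}$ in the relation: this yields $q_i^{(m)}=-\sum_{j\ne i}X_{j-1}^{a_j}q_j^{(a_i+m)}$ for all $m\ge 0$, so each $X_{i-1}$-homogeneous layer of $q_i$, hence $q_i$ itself, lies in $(X_{j-1}^{a_j}:j\ne i)$; the last assertion follows because an ideal generated in degrees $\ge\min_{j\ne i}a_j$ contains no nonzero form of smaller degree.

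Now assume $Z$ is not $d$-regular, so there is a nontrivial relation $\sum_{i=1}^s w_i=0$ with $w_i\in W_i$. For part \ref{prop:a} I would write $w_i=X_{i-1}^{d-k_i}q_i$ with $q_i\in\calS_{k_i}$ (note $d-k_i\ge 1$, since $d>k_i+k_j\ge k_i$ for any $j\ne i$); the claim gives $q_i\in(X_{j-1}^{d-k_j}:j\ne i)$, while $\deg q_i=k_i<d-k_j$ for every $j\ne i$ precisely by hypothesis \ref{prop:a}, so $q_i=0$ and all $w_i=0$, a contradiction. For part \ref{prop:b} I would first split off the leading ($e=0$) term of each $W_i$: because $X_{i-1}\nmid G_i$, the decomposition $W_i=\langle X_{i-1}^{d-k_i}G_i\rangle\oplus W_i'$ is direct, where $W_i':=\sum_{e=1}^{k_i}X_{i-1}^{d-k_i+e}\cdot\calD^e_{X_{i-1}}(G_i)\subseteq X_{i-1}^{d-k_i+1}\cdot\calS_{k_i-1}$, so write $w_i=c_iX_{i-1}^{d-k_i}G_i+w_i'$. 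If $c_i\ne 0$ for some $i$, solving the relation for $X_{i-1}^{d-k_i}G_i$ exhibits precisely an instance of \cref{smaller:inverse}, whence $Z$ is redundant by \Cref{prop:externalconditions}, contradicting the irredundancy assumed in \ref{prop:b}. Hence all $c_i=0$, so each $w_i=w_i'$ is divisible by $X_{i-1}^{d-k_i+1}$, say $w_i=X_{i-1}^{d-k_i+1}\tilde q_i$ with $\tilde q_i$ homogeneous of degree $k_i-1$ when $k_i\ge 1$ and $\tilde q_i=0$ when $k_i=0$; applying the claim with exponents $d-k_i+1\ge 1$ gives $\tilde q_i\in(X_{j-1}^{d-k_j+1}:j\ne i)$, and $\deg\tilde q_i=k_i-1<d-k_j+1$ for every $j\ne i$ precisely by hypothesis \ref{prop:b}, so $\tilde q_i=0$ and all $w_i=0$, again a contradiction. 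In both cases the $W_i$ are in direct sum, so $Z$ is $d$-regular.

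The only genuinely delicate point is part \ref{prop:b}: a general relation $\sum_i w_i=0$ need not itself have the shape \cref{smaller:inverse}, so one must first invoke irredundancy (through \Cref{prop:externalconditions}) to exclude the appearance of the leading terms $X_{i-1}^{d-k_i}G_i$; only after removing those does the surviving relation sit one degree deeper in the $X_{i-1}$, which is exactly what the weaker numerical bound of \ref{prop:b} is tuned to control. Everything else is the separation claim, whose verification is just a comparison of coefficients in one variable, and the routine bookkeeping of the reduction to direct sums.
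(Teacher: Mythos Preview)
Your proof is correct and follows essentially the same approach as the paper: reduce $d$-regularity to the direct-sum statement for the local inverse systems $W_i=I(Z_i)_d^\perp$, change coordinates so that $L_i=X_{i-1}$, and then separate the summands via the divisibility/degree count; in case~\ref{prop:b} you both invoke \Cref{prop:externalconditions} to strip off the leading terms $L_i^{d-k_i}G_i$ before running the same argument with exponents raised by one. Your combinatorial claim and the explicit direct-sum splitting $W_i=\langle X_{i-1}^{d-k_i}G_i\rangle\oplus W_i'$ make rigorous what the paper handles more informally (``$X_i^{d-k_i}$ cannot be factored out of the right-hand side''), but the two arguments are otherwise identical.
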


\begin{proof}    
    For every $1 \leq i \leq s$ we let $Z_i$ be the natural apolar scheme to $F_i=L_i^{d-k_i}G_i$ supported at $L_i$, so $Z=\cup_{i=1}^s Z_i$. 
    By \Cref{rmk:BJMRlemmas} each $Z_i$ is $d$-regular, therefore $\dim I(Z_i)_d^{\perp} = \deg (Z_i)$. 
    By \Cref{rmk:relation}, we only need to show that there are no $\Bbbk$-linear relations involving generators of $I(Z_i)_d^{\perp}$ for different $i$'s.
    If there was such a relation, there should exist $Q_i \in \sum_{e=0}^{k_i} L_i^{e}\cdot D^e_{L_i}(G_i)$, for $i = 1,\ldots,s$, such that
    %$Q_1 \in \calS_{d-k_1}, \dots, Q_s \in \calS_{d-k_s}$ such that
    \begin{equation}\label{eq:condition_prop3}
        L_i^{d-k_i}Q_i=\sum_{i\neq j}L_j^{d-k_j}Q_j.
    \end{equation}
    Since the $L_i$'s are linearly independent, up to a change of coordinates we can write the above as
    \[ X_i^{d-k_i}\tilde Q_i=\sum_{i\neq j}X_j^{d-k_j}\tilde Q_j.\]
    In case \ref{prop:a}, the hypothesis $d-k_i > k_j = \deg ( Q_j) = \deg ( \tilde Q_j)$ prevents from factoring $X_i^{d-k_i}$ out of the right-hand side of the above equation. Thus, no such relation may hold.

    In case \ref{prop:b}, since $Z$ is irredundant, by \Cref{prop:externalconditions} we may assume that any relation between the $I(Z_i)_d^{\perp}$'s does not involve any of the terms $L_i^{d-k_i}G_i$.
    Thus, \cref{eq:condition_prop3} actually leads to a relation of the form
    \[ X_i^{d-k_i+1}\tilde Q_i=\sum_{i\neq j}X_j^{d-k_j+1}\tilde Q_j. \]
    As in the previous case, the factor $X_i^{d-k_i+1}$ cannot appear on the right-hand side of the above sum due to $d > \max_{i \neq j}\{k_i+k_j\}-2$.
    
    In conclusion, in both cases, the $I(Z_i)_d^{\perp}$'s cannot intersect, so $Z$ is $d$-regular.
\end{proof}

\begin{remark}
    We note that requiring $s>1$ in \Cref{prop:lowmultiplicity} is not restrictive, as in the local case ($s=1$) \Cref{rmk:BJMRlemmas} already contains a stronger result.
\end{remark}

An immediate corollary of \Cref{prop:lowmultiplicity} is the following.

% An immediate consequence of \Cref{prop:lowmultiplicity} is that schemes evinced by GADs of $F \in \calS_d$ involving $\Bbbk$-independent supports and forms of degree smaller than $\frac{d}{2}$ are always $d$-regular.

\begin{corollary}\label{cor:lowmultiplicity_1}
    Let $Z$ be the scheme evinced by the GAD $F = \sum_{i=1}^s L_i^{d-k_i}G_i \in \calS_d$, such that $L_1,\ldots,L_s$ are $\Bbbk$-linearly independent and $k_i < \frac{d}{2}$ for every $i \in \{1,\ldots,s\}$. Then $Z$ is $d$-regular.
\end{corollary}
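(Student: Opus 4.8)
The plan is to deduce this directly from \Cref{prop:lowmultiplicity}, after isolating the local case. First I would treat $s=1$ separately: then $Z = Z_{L_1^{d-k_1}G_1,L_1}$, which by \Cref{rmk:localGAD1} is contained in the $(k_1+1)$-fat point supported at $[L_1]$ and is therefore $k_1$-regular; since $k_1 < \frac{d}{2} \le d$, the scheme $Z$ is \emph{a fortiori} $d$-regular. This step is needed because \Cref{prop:lowmultiplicity} is stated only for $s>1$, and it is precisely the content of the remark following that proposition.

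For $s>1$ I would simply check that the numerical hypothesis of \Cref{prop:lowmultiplicity}\ref{prop:a} holds. From $k_i < \frac{d}{2}$ for every $i \in \{1,\dots,s\}$ we obtain, for all $i \neq j$,
\[
    k_i + k_j < \frac{d}{2} + \frac{d}{2} = d,
\]
hence $d > \max_{i \neq j}\{k_i + k_j\}$. Since $L_1,\dots,L_s$ are $\Bbbk$-linearly independent by assumption, \Cref{prop:lowmultiplicity}\ref{prop:a} applies and yields that $Z$ is $d$-regular.

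There is essentially no obstacle here: the argument amounts to a one-line numerical inequality plus an invocation of the proposition. The only point requiring care is remembering that \Cref{prop:lowmultiplicity} excludes the local case $s=1$, which must instead be handled through the containment in a $(k_1+1)$-fat point from \Cref{rmk:localGAD1}.
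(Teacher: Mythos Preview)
Your proposal is correct and matches the paper's approach: the corollary is stated there as an immediate consequence of \Cref{prop:lowmultiplicity}, with the local case $s=1$ covered by the remark immediately following that proposition, exactly as you do. The only minor difference is that the paper's remark cites \Cref{rmk:BJMRlemmas} rather than \Cref{rmk:localGAD1} for the $s=1$ case, but your reference to \Cref{rmk:localGAD1} is in fact the more precise one, since that is where the $k$-regularity of $Z_{L^{d-k}G,L}$ is actually established.
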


\begin{corollary}\label{cor:lowmultiplicity_2}
    Let $Z = \bigcup_{i=1}^s Z_i$ be a $0$-dimensional scheme apolar and irredundant to $F \in \calS_d$, such that $I(Z_i) \supset \wp_{L_i}^{\lceil{\frac{d}{2}}\rceil+ 1}$ and the $L_i$ are $\Bbbk$-linearly independent.
    Then $Z$ is $d$-regular. % for $ h\geq 2k-1$.
\end{corollary}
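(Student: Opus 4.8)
The plan is to realize $Z$ as a scheme evinced by a GAD of $F$ with small multiplicities and then invoke \Cref{prop:lowmultiplicity}\ref{prop:b}. The case $s=1$ is immediate and does not even use irredundancy: $Z=Z_1$ is contained in the $(\lceil d/2\rceil+1)$-fat point supported at $[L_1]$, which is $\lceil d/2\rceil$-regular, and since containment of $0$-dimensional schemes preserves regularity (as recalled in \Cref{rmk:localGAD1}) and $\lceil d/2\rceil\le d$, the scheme $Z$ is $d$-regular. So assume $s>1$ from now on.

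The first step is to produce the GAD. Since $\lceil d/2\rceil\le d$ we have $\wp_{L_i}^{\lceil d/2\rceil+1}\supseteq\wp_{L_i}^{d+1}$ for every $i$, so \Cref{GAD:inside:fat:points} applied with $\tilde k_i=\lceil d/2\rceil$ yields a subscheme $Y\subseteq Z$ that is evinced by a GAD $F=\sum_{i=1}^s L_i^{d-k_i}G_i$ with $k_i\le\lceil d/2\rceil$ for all $i$; in particular $Y$ is apolar to $F$. Here is the one point that needs care: a priori \Cref{GAD:inside:fat:points} only gives the containment $Y\subseteq Z$, but since $Y$ is apolar to $F$ and $Z$ is irredundant, in fact $Y=Z$; hence $Z$ itself is the scheme evinced by this GAD.

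The second step is a routine check of the hypotheses of \Cref{prop:lowmultiplicity}\ref{prop:b}. The $L_i$ are linearly independent and $Z$ is irredundant by assumption, $s>1$, and $Z$ is GAD-evinced by the previous step; it only remains to verify $d>\max_{i\neq j}\{k_i+k_j-2\}$. Since $k_i\le\lceil d/2\rceil$ we get $k_i+k_j\le 2\lceil d/2\rceil$, and $2\lceil d/2\rceil$ equals $d$ when $d$ is even and $d+1$ when $d$ is odd; in both cases $k_i+k_j-2\le d-1<d$. Thus \Cref{prop:lowmultiplicity}\ref{prop:b} applies and $Z$ is $d$-regular.

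I do not anticipate a real obstacle: all the substance already lives in \Cref{prop:lowmultiplicity} and \Cref{GAD:inside:fat:points}. The only subtleties worth recording are the upgrade from $Y\subseteq Z$ to $Y=Z$ provided by irredundancy — without it one only controls a subscheme of $Z$, not $Z$ itself — and the elementary inequality $2\lceil d/2\rceil\le d+1$, which is precisely what lands us in case \ref{prop:b} rather than the coarser case \ref{prop:a} (the latter would require the strict bound $k_i<d/2$ already exploited in \Cref{cor:lowmultiplicity_1}).
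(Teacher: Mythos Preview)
Your proof is correct and follows the same approach as the paper, namely combining \Cref{GAD:inside:fat:points} (plus irredundancy to upgrade $Y\subseteq Z$ to $Y=Z$) with the multiplicity bound of \Cref{prop:lowmultiplicity}. In fact your version is slightly more careful: the paper cites \Cref{cor:lowmultiplicity_1}, which requires the strict inequality $k_i<d/2$, whereas \Cref{GAD:inside:fat:points} only yields $k_i\le\lceil d/2\rceil$; your appeal to \Cref{prop:lowmultiplicity}\ref{prop:b} closes this gap cleanly via $2\lceil d/2\rceil-2<d$.
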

\proof It follows by \Cref{GAD:inside:fat:points} together with \Cref{cor:lowmultiplicity_1}.
\endproof

\begin{remark} \label{rmk:sharp}
We notice that every requirement of \Cref{prop:lowmultiplicity} is sharp.
In fact, \Cref{ex:irredbutnotmin} shows that the inequality in \ref{prop:a} cannot be improved: if $d = \max_{i \neq j}\{k_i + k_j\}$ the scheme $Z$ may be not $d$-regular.
Similarly, the following \Cref{irregular:irredundant} shows that the inequality in \ref{prop:b} is also sharp.
Finally, \Cref{ex:perazzo} will show that the $\Bbbk$-linear independence of the supports is also needed.
\end{remark}

The following example shows that schemes that are irredundant to $F \in \calS_d$ may be not $d$-regular.

\begin{example}\label{irregular:irredundant}
    Let us consider the scheme $Z$ evinced by the GAD $F = X_0 G_1 + X_1 G_2 \in \calS_4$, where
\begin{align*}
   G_1 &= 10X_0^3 - 4X_0^2X_1 + 4X_0^2X_2 - 4X_0X_1^2 - 8X_0X_1X_2 - 3X_0X_2^2 - 8X_1^3 - 4X_2^3  \in \calS_3, \\
   G_2 &= 5X_0^3 + 9X_0X_1^2 - 5X_1^3 - 7X_1^2X_2 + 6X_1X_2^2 - X_2^3 \in \calS_3.
\end{align*}
Its defining ideal is
\[
    I(Z) = ( Y_0^3Y_1^3 - 2Y_0^3Y_2^3 + 5Y_1^3Y_2^3,
    3Y_0^2Y_1Y_2 - 2Y_0Y_2^3,
    Y_0Y_1^2Y_2,
    Y_0Y_1Y_2^2,
    Y_2^4 ),
\]
whose minimal primary decomposition is $I(Z) = I_1 \cap I_2$, where
\[
    I_1 = ( -3Y_0Y_1Y_2 + Y_1^3,
    Y_1^2Y_2,
    Y_1Y_2^2,
    Y_1^3 - 2Y_2^3 ), \quad
    I_2 = ( Y_2^4,
    Y_0^3 + 5Y_2^3,
    Y_0Y_2 ).
\]
Its Hilbert function is $\HF_Z = (1,3,6,10,11,12,12,\dots)$, hence $Z$ is not regular in degree $4 = \deg (F)$. 
\begin{claim*}
    $Z$ is irredundant to $F$.
\end{claim*}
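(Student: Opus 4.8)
The plan is to verify irredundancy directly by exhibiting the primary decomposition and checking that neither primary component can be replaced by a proper subscheme while remaining apolar to $F$. Since $Z = Z_1 \cup Z_2$ with $I(Z) = I_1 \cap I_2$ and $Z_i$ supported at $[X_0]$ and $[X_1]$ respectively, irredundancy of $Z$ amounts to showing that no $Z_i' \subsetneq Z_i$ together with $Z_j$ ($j\neq i$) is apolar to $F$. Equivalently, using the Apolarity Lemma (\Cref{lemma:apolarity}) and \Cref{rmk:relation}, one has to show that $F$ genuinely needs the full contribution of each local inverse system: that is, $F \notin I(Z_1')^\perp_d + I(Z_2)^\perp_d$ for every proper subscheme $Z_1' \subsetneq Z_1$, and symmetrically for $Z_2$.

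First I would recall from \Cref{sec:NAS} that the local piece $Z_i = Z_{L_i^{d-k_i}G_i, L_i}$ is the natural apolar scheme, so its localized inverse system is generated (via contraction) by the dehomogenization $(g_i)_{L_i}$ of $(G_i)_{\rm dp}$; here both $k_1 = k_2 = 3$ and $L_1 = X_0$, $L_2 = X_1$. A proper subscheme $Z_i' \subsetneq Z_i$ corresponds to a proper sub-$\calRa$-submodule of the inverse system, hence to replacing $g_i$ by some $g_i'$ with $\Ann^{\contract}(g_i') \supsetneq \Ann^{\contract}(g_i)$, which (since the inverse systems are cyclic modules over the local ring) forces $(g_i')_{L_i}$ to lie in the proper submodule generated inside $\calSa_{\rm dp}$, in particular it cannot contain $g_i$ itself as its "top part". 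Concretely, the contribution of $Z_1$ to $F$ in degree $d=4$ is $X_0 \cdot (\text{span of } G_1 \text{ and its derivatives lifted back}) $, i.e. $I(Z_1)^\perp_4 = \langle X_0^{d-3+e}\cdot \calD^e_{X_0}(G_1) : 0 \le e \le 3\rangle$, and passing to a proper subscheme shrinks this space. So the task reduces to the finite linear-algebra check: writing $F = X_0 G_1 + X_1 G_2$, show that $X_0 G_1$ does not lie in $(\text{proper subspace of } I(Z_1)^\perp_4) + I(Z_2)^\perp_4$, and likewise $X_1 G_2 \notin I(Z_1)^\perp_4 + (\text{proper subspace of }I(Z_2)^\perp_4)$ — equivalently, by \Cref{prop:externalconditions}, that \cref{smaller:inverse} fails for both $i=1$ and $i=2$. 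Since that proposition shows a relation of that precise shape is the \emph{only} obstruction to irredundancy once the components are known to be apolar, checking the negation of \cref{smaller:inverse} for each $i$ is exactly what is needed.

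Thus the proof reduces to two explicit computations in the vector space $\calS_4$ of dimension $35$: for $i=1$, verify $X_0^{d-3}G_1 = X_0 G_1 \notin \sum_{1\le e \le 3} X_0^{1+e}\cdot \calD^e_{X_0}(G_1) + \sum_{0 \le e \le 3} X_1^{1+e}\cdot \calD^e_{X_1}(G_2)$, and symmetrically for $i=2$; by \Cref{prop:externalconditions} the failure of both these membership relations implies $Z$ is irredundant. In practice one picks explicit monomial bases for the spaces $\calD^e_{X_0}(G_1)$ and $\calD^e_{X_1}(G_2)$ (these are spanned by partial derivatives of $G_1$ in the directions $Y_1, Y_2$ and of $G_2$ in the directions $Y_0, Y_2$, respectively, then multiplied by the appropriate power of $X_0$ or $X_1$), assembles the corresponding coefficient matrix, and checks by a rank computation that adjoining the coordinate vector of $X_0 G_1$ strictly increases the rank — and likewise for $X_1 G_2$. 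This is routine but is genuinely the crux: the content of the claim is precisely that these linear systems are inconsistent.

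The main obstacle I anticipate is purely the bookkeeping of the explicit computation: one must correctly compute the derivative spaces $\calD^e_{L_i}(G_i)$, keep track of the divided-power normalizations when passing between the contraction and differentiation actions, and organize the $35$-dimensional coefficient matrices so that the rank check is reliable. There is no conceptual difficulty once \Cref{prop:externalconditions} is invoked — it converts "irredundant" into a concrete non-membership statement — so the write-up can legitimately delegate the final rank verification to a computer algebra check (the authors' Macaulay2/Magma repository \cite{Repo}), noting that $\HF_Z = (1,3,6,10,11,12,12,\dots)$ already shows $\dim I(Z)^\perp_4 = 11 < 12 = \len(Z)$, so $Z$ fails to be $4$-regular while still, by the above, being minimal by inclusion.
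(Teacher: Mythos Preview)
Your reduction to verifying that \cref{smaller:inverse} fails for $i=1,2$ is the correct target, but you misattribute its justification to \Cref{prop:externalconditions}: that proposition only states that \emph{if} \cref{smaller:inverse} holds for some $i$ \emph{then} $Z$ is redundant. You need the converse --- that redundancy of $Z$ forces \cref{smaller:inverse} for some $i$ --- and that is not what the proposition says. The converse does hold, and you nearly give the reason when you mention cyclic modules: since the local inverse system $M_i = \calRa \contract g_i$ is cyclic over the local ring $\calRa$, its unique maximal proper submodule is $\frakm M_i$, so for any proper subscheme $Z_i' \subsetneq Z_i$ the space $I(Z_i')^\perp_d$ lies inside the image of $\frakm M_i$, which is precisely the ``$e\geq 1$'' piece $\sum_{e\ge1} L_i^{d-k_i+e}\calD^e_{L_i}(G_i)$. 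Once that sentence is made explicit, your two non-membership checks are genuinely equivalent to irredundancy and may be settled by a rank computation. As written, though, the logical bridge from ``\cref{smaller:inverse} fails for both $i$'' to ``$Z$ is irredundant'' rests on a statement you have asserted but not proved.

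The paper argues along a different line. Rather than characterising redundancy through \cref{smaller:inverse}, it invokes \Cref{GAD:inside:fat:points}: since both components of $Z$ lie in $4$-fat points, any apolar subscheme of $Z$ must contain one evinced by a GAD $F = X_0^{a_0}Q_1 + X_1^{a_1}Q_2$. The monomials $-4X_0X_2^3$ and $-X_1X_2^3$ appearing in $F$ force $a_0=a_1=1$, so every such GAD has the form $F = X_0(G_1+X_1T)+X_1(G_2-X_0T)$ for some $T\in\calS_2$. Imposing that the generators of $I_1$ annihilate $X_0(G_1+X_1T)$ and those of $I_2$ annihilate $X_1(G_2-X_0T)$ yields linear equations on the coefficients of $T$ that reduce to $T = \lambda_2 X_0X_1$; a final computational check shows all such GADs evince $Z$ itself. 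This route is more hands-on --- the constraints on $T$ are extracted explicitly from the ideal generators --- whereas yours packages everything into a single linear-algebra test but leans on the module-theoretic converse above and delegates the actual verification entirely to a CAS.
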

\begin{proof}[Proof of Claim.]
The connected components of $Z$ are both contained in $4$-fat points, i.e. $I_i \supset \wp_{X_{i-1}}^4$, hence by \Cref{GAD:inside:fat:points} it is sufficient to show that the unique scheme $Y \se Z$ evinced by a GAD of $F$ of type $F = X_0^{a_0} Q_1 + X_1^{a_1} Q_2$ with $a_0,a_1 \geq 1$ is $Z$ itself.
Since in the expression of $F$ appear the monomials $-4X_0X_2^3$ and $-X_1X_2^3$, then it is easy to see that there is no such a GAD of $F$ for $a_0 > 1$ or $a_1 > 1$, therefore we assume $a_0 = a_1 = 1$. 

Since this new additive decomposition is still equal to $F$, we have
\[
    X_0(Q_1-G_1) + X_1(Q_2-G_2) = 0,
\]
hence there is $T \in \calS_2$ such that 
\[
    X_1T = Q_1-G_1, \quad X_2T = -Q_2+G_2.
\]
This means that $Y$ is evinced by a GAD of $F$ of type
\[
    F = X_0 (G_1 + X_1 T) + X_1 (G_2 - X_0 T),
\]
for some
\[
    T =  \lambda_{1} X_0^2 + \lambda_{2} X_0X_1 + \lambda_{3} X_0X_2 + \lambda_{4} X_1^2 + \lambda_{5} X_1X_2 + \lambda_{6} X_2^2 \in \calS_2.
\]
If $Y = Y_1 \cup Y_2 \se Z$, then we have
\[ I_1 \se I(Y_1) = I(Z_{X_0 (G_1 + X_1 T),X_0}) \se \Ann\big(X_0 (G_1 + X_1 T) \big), \]
which implies
\[
    \begin{cases}
        0 = (-3Y_0Y_1Y_2 + Y_1^3) \circ \big(X_0(G_1 + X_1 T)\big) = 6(-\lambda_3 + \lambda_4)X_0 - 6\lambda_5X_1 - 6\lambda_6X_2, \\
        0 = (Y_1^2Y_2) \circ \big(X_0(G_1 + X_1 T)\big) = 2\lambda_5X_0, \\
        0 = (Y_1Y_2^2) \circ \big(X_0(G_1 + X_1 T)\big) = 2\lambda_6X_0, \\
        0 = (Y_1^3 - 2Y_2^3) \circ \big(X_0(G_1 + X_1 T)\big) = 6\lambda_4X_0.
    \end{cases} 
\]
Similarly, from $I_2 \se \Ann\big(X_1 (G_2 + X_0 T)\big)$ we obtain
\[
    \begin{cases}
        0 = (Y_2^4) \circ \big(X_1(G_2 + X_0 T)\big) = 0, \\
        0 = (Y_0^3 + 5Y_2^3) \circ \big(X_1(G_2 + X_0 T)\big) = -6\lambda_1X_1, \\
        0 = (Y_0Y_2) \circ \big(X_1(G_2 + X_0 T)\big) = -2\lambda_3X_0X_1 - \lambda_5X_1^2 - 2\lambda_6X_1X_2.
    \end{cases} 
\]
The above systems imply
\[
    \lambda_1 = \lambda_3 = \lambda_4 = \lambda_5 = \lambda_6 = 0,
\]
thus we conclude that $T = \lambda_2 X_0X_1$.
We computationally verify that the scheme evinced by the GAD
\[
    X_0 (G_1 + \lambda_2 X_0X_1^2) + X_1 (G_2 - \lambda_2 X_0^2X_1)
\]
is independent on $\lambda_2 \in \Bbbk$, and it is always equal to $I(Z)$.
Therefore, we conclude that $Y = Z$, i.e. $Z$ is irredundant.
\end{proof}

The proof of the above claim shows an effective way for establishing irredundancy to $F$ by symbolically testing its GADs.
\exampleend
\end{example}

\subsection{Tangential decompositions} \label{sec:tangential}

In this section, we prove that if a minimal apolar scheme to $F \in \calS_d$ is a union of simple points and {\it $2$-jets} (i.e. local $0$-dimensional schemes of length $2$), then it is $d$-regular.
Such schemes are evinced by GADs as in \cref{tg:decomp}, which are called \emph{tangential decompositions} due to their relation with secant varieties of tangential varieties of Veronese varieties \cite{BT20:Waring, CGG}.
%\todo{se non parliamo mai di Veronese, io lettore mi chiederei perchè si chiama tangenziale. Alessandra: ho aggiunto una frasetta.}

\begin{proposition}\label{prop:2jets}
    Let $Z = Z_1 \cup \ldots \cup Z_r$ such that $\len(Z_i) \leq 2$ for every $i \in \{1, \dots, r\}$.
    If $Z$ is of minimal length among the apolar schemes to $F \in \calS_d$ of this type, then $Z$ is $d$-regular.
    %Let $F \in \calS_d$ and let $\mathfrak{T}(F)$ be the set of $0$-dimensional schemes in $\bbP^n$ that are apolar to $F$ and whose connected components have length at most $2$.
    %If $Z \in \mathfrak{H}_{\leq 2}(F)$ has minimal length in $\mathfrak{H}_{\leq 2}(F)$, then $Z$ is $d$-regular. 
\end{proposition}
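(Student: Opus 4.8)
The plan is to write $Z = Z_1 \cup \dots \cup Z_r$ where each $Z_i$ is a simple point or a $2$-jet, so that $F = \sum_{i=1}^s L_i^d + \sum_{i=s+1}^r L_i^{d-1} M_i$ is the associated tangential decomposition, with $L_{s+1}, \dots, L_r$ not dividing the corresponding linear forms $M_i$. By \Cref{rmk:relation}, proving $d$-regularity amounts to showing that there is no nontrivial $\Bbbk$-linear relation among generators of the various $I(Z_i)_d^\perp$; equivalently, that if $\sum_{i=1}^r P_i = 0$ with $P_i \in I(Z_i)_d^\perp$, then every $P_i = 0$. Recall from \Cref{ssec:lowmultiplicity_independent} that $I(Z_i)_d^\perp = \langle L_i^d \rangle$ when $Z_i$ is a simple point, and $I(Z_i)_d^\perp = \langle L_i^d \rangle + L_i^{d-1}\cdot \langle \ell : \ell \in \calS_1 \rangle = L_i^{d-1}\cdot \calS_1$ when $Z_i$ is a $2$-jet supported at $[L_i]$ (here $\calD^1_{L_i}(M_i) = \langle M_i' \rangle$ for some linear form, and $\langle L_i^d, L_i^{d-1} M_i'\rangle = L_i^{d-1}\langle L_i, M_i'\rangle$, which has dimension $2$ exactly because $L_i \nmid M_i$). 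So a putative relation has the shape $\sum_{i} L_i^{d-1} N_i = 0$ with $N_i \in \calS_1$, where $N_i \in \langle L_i \rangle$ for the simple points.

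**Reducing to a statement about binary-type relations and exploiting minimality.**
The key point is that such a relation $\sum_{i=1}^r L_i^{d-1} N_i = 0$ would force $Z$ not to be minimal, contradicting the hypothesis. Here is the mechanism. Suppose such a relation holds with not all $N_i = 0$; say $N_1 \neq 0$. First, I would argue that the relation can be taken with $N_1 \notin \langle L_1 \rangle$: if $N_1 = c L_1$, then $L_1^d$ participates in the relation and one sees directly that the apolar subscheme obtained by deleting (or shrinking) $Z_1$ is still apolar to $F$, contradicting minimality — so actually it suffices to handle the case where the offending component is a $2$-jet and $N_1$ is independent of $L_1$. Rewriting, $L_1^{d-1} N_1 = -\sum_{i \geq 2} L_i^{d-1} N_i \in I(Z_1')_d^\perp$ where $Z_1'$ is the simple point $[L_1]$, which is strictly smaller than the $2$-jet $Z_1$. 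This says $F = L_1^{d-1} N_1 + \sum_{i\geq 2} L_i^{d-1}M_i'' = (\text{something apolar to the shortened } Z_1' \cup Z_2 \cup \dots \cup Z_r)$; more precisely, the identity shows that $F$ lies in $I(Z_1' \cup Z_2 \cup \dots \cup Z_r)_d^\perp$, hence is apolar to a scheme of length $\len(Z) - 1$. This contradicts minimality of $Z$, so no such relation exists and $Z$ is $d$-regular.

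**The main obstacle.**
The delicate part is the bookkeeping that turns the linear relation into an honestly smaller apolar scheme, and in particular justifying that we may assume $N_1 \notin \langle L_1 \rangle$. If $N_i \in \langle L_i\rangle$ for every $i$, the relation is $\sum c_i L_i^d = 0$; then I would pick an index $i$ with $c_i \neq 0$, solve for $L_i^d$, and observe that $F = \sum_{j \neq i}(\text{stuff in } I(Z_j)_d^\perp) + (\text{terms } L_i^{d-1}M_i')$, so $F$ is apolar to $(\bigcup_{j\neq i} Z_j) \cup (\text{simple point } [L_i])$ when $Z_i$ was a $2$-jet — again of length $\len(Z)-1$ — or to $\bigcup_{j\neq i}Z_j$ when $Z_i$ was a simple point, of length $\len(Z) - 1$. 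Either way minimality is violated. When some $N_i \notin \langle L_i \rangle$, that $Z_i$ must be a $2$-jet, and the same substitution replaces it by the simple point $[L_i]$. So the argument splits into these cases but each reduces length by at least one. I expect the only real care needed is to confirm that after the substitution the resulting $F$-expression genuinely lands in the inverse system of the shortened scheme (using $I(W)_d^\perp = \sum I(W_i)_d^\perp$ and the explicit descriptions of $I(Z_i)_d^\perp$ for points and $2$-jets from \Cref{ssec:lowmultiplicity_independent}), together with the Apolarity Lemma (\Cref{lemma:apolarity}) to pass back to containment of ideals.
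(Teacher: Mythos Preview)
Your overall strategy matches the paper's: both argue that a nontrivial $\Bbbk$-linear relation among the generators $\{L_i^d\}_{1\le i\le r}\cup\{L_j^{d-1}M_j\}_{j\text{ a }2\text{-jet}}$ of $I(Z)_d^\perp$ would allow one to build a strictly shorter apolar scheme, contradicting minimality. Your Case~2 (some $N_i\notin\langle L_i\rangle$) is correct and is exactly what \Cref{prop:externalconditions} gives, and Case~1a (some simple-point index has $c_i\neq 0$) is also fine. The gap is in Case~1b.

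Suppose the relation is $\sum_i c_i L_i^d=0$ with $c_i\neq 0$ only for $2$-jet indices, say $c_1\neq 0$. You solve for $L_1^d$ and claim that $F$ is then apolar to $[L_1]\cup\bigcup_{j\neq 1}Z_j$. But the GAD addendum at $L_1$ is $L_1^{d-1}M_1$, not $L_1^d$; your substitution eliminates $L_1^d$ from the spanning set, yet $F$ still requires the generator $L_1^{d-1}M_1$, which does \emph{not} lie in $\langle L_1^d\rangle=I([L_1])_d^\perp$. So the expression ``$F=\sum_{j\neq 1}(\text{stuff in }I(Z_j)_d^\perp)+L_1^{d-1}M_1'$'' does not place $F$ in $I\big([L_1]\cup\bigcup_{j\neq 1}Z_j\big)_d^\perp$, and no length has been saved. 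There is no subscheme of the $2$-jet $Z_1$ whose degree-$d$ inverse system contains $L_1^{d-1}M_1$ but not $L_1^d$.

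The paper closes this gap by proving the stronger statement that the $(d-1)$-st powers $L_j^{d-1}$ (for $2$-jet indices $j$) are linearly independent. If $L_1^{d-1}=\sum_{j\ge 2}\lambda_j L_j^{d-1}$, one multiplies by $G_1$ to obtain $L_1^{d-1}G_1=\sum_{j\ge 2}\lambda_j L_j^{d-1}G_1$ and substitutes \emph{this} into the GAD of $F$; the support $[L_1]$ disappears entirely, yielding an apolar scheme of length at most $\len(Z)-2$. Note that independence of the $L_j^{d-1}$ implies independence of the $L_j^d$ (differentiate a putative relation), which is what is actually needed. Your argument can be repaired by inserting exactly this step: from $\sum c_iL_i^d=0$ pass, via a suitable partial derivative, to a nontrivial relation among the $L_i^{d-1}$, then multiply by $M_1$ and substitute into the GAD.

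A minor slip: for a $2$-jet $Z_i$ you wrote $I(Z_i)_d^\perp=L_i^{d-1}\cdot\calS_1$, which is $(n+1)$-dimensional; the correct space is the $2$-dimensional $L_i^{d-1}\cdot\langle L_i,M_i'\rangle$, as your parenthetical remark indicates.
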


\begin{proof}
%Since $I(Z) \supset \bigcap \wp_i^2$ and $(\wp_i^2)^\perp_d = \langle L_i^{d-1}G ~:~ G \in \calS_1\rangle$, $Z$ is evinced by... 
By \Cref{GAD:inside:fat:points}, $Z$ is evinced by a GAD of $F$ of type
\begin{equation}\label{tg:decomp}
    F=\sum_{i=1}^s L_i^{d-1}G_i + \sum_{i=s+1}^r L_i^d
    \end{equation}
for some $0 \leq s \leq r$ and $L_i,G_i \in \calS_1$.
Moreover, we have
\[ I(Z)^{\perp}_d = \langle L_i^d, L_j^{d-1}G_j \rangle_{ \substack{1 \leq i \leq r\\ 1 \leq j \leq s}}. \] 
Since $\len(Z)$ is $r+s$, which is also equal to the number of generators of $I(Z)^{\perp}_d$, in order to prove that $Z$ is $d$-regular it is sufficient to show that all those generators are $\Bbbk$-linearly independent.
We prove that if there is a linear relation between the $L_i^d$'s and the $L_j^{d-1}G_j$'s as above, then we can explicitly produce a scheme whose connected components have length at most $2$ and with a length smaller than $\len(Z)$, contradicting its minimality.

When such a relation involves an addenda appearing in the above GAD of $F$, then $Z$ is redundant by \Cref{prop:externalconditions}, which would contradict the minimality assumption of $Z$.
Thus, if there is a linear relation among the generators of $I(Z)^{\perp}_d$, it can only involve $\{L_i^d\}_{1 \leq i \leq s}$, which are the unique generators of $I(Z)^{\perp}_d$ that do not appear in the GAD of \cref{tg:decomp}. Therefore if we show that $L_1^d, \ldots, L_s^d$ are linearly independent we are done.
We will prove a stronger fact, namely that $L_1^{d-1}, \ldots , L_s^{d-1}$ are linearly independent.
Suppose by contradiction that $L_1^{d-1} = \sum_{i=2}^s \lambda_i L_i^{d-1}$ for some $\lambda_i\in \Bbbk$. By substituting this relation in \cref{tg:decomp}, we get
\[
    F=\sum_{i=2}^s L_i^{d-1}(G_i+\lambda_i G_1)+ \sum_{i=s+1}^r L_i^d.
\]
The scheme $Z'$ evinced by this new GAD of $F$ is still apolar to it and has components of length at most $2$, but its length is at most $s+r-2= \len (Z)-2 < \len (Z)$. 
\end{proof}

Notice that in the proof of \Cref{prop:2jets} we have employed the length-minimality of the scheme $Z$ apolar to $F$.
Indeed, the irredundancy of an apolar scheme of $2$-jets is not sufficient to guarantee the regularity in the degree of $F$, as shown in the following example.

\begin{example}\label{ex:perazzo}
    Let $Z$ be the scheme evinced by the following GAD of $F \in \calS_3$:
    \[ F = X_0^2X_2 + X_1^2X_3 + (X_0+X_1)^2X_4 + (X_0-X_1)^2(X_2-3X_3-2X_4)+ (X_0+2X_1)^2(X_2+X_3+X_4). \]
    
    It is easy to check that $F$ is written in essential variables \cite{IaKa:book, carlini}, and that $Z$ is the union of five $2$-jets $Z_1, \ldots , Z_5$ supported on points $[L_1], \dots, [L_5] \in \Pn$ of the rational normal cubic. %$\nu_3(\mathbb{P} (\Bbbk[x,y]_1))$ which spans a $\bbP^3$. Therefore the 5 points on it impose at most $4<5$ conditions to the cubics, so $\dim \langle \nu_3(Z)\rangle \leq 9 <10= \len(Z)$, hence $Z$ is irregular in degree $3=\deg(F)$. 
    Its Hilbert function is $\HF_Z = (1,5,8,9,10,10, \ldots)$, therefore 
    %$\dim \langle \nu_3(Z)\rangle = 9 $.
    $Z$ is not regular in degree $3 = \deg(F)$.

    However, $Z$ is irredundant: any proper subscheme of $Z=\cup_{i=1}^5Z_i$ has to be contained in one of the following, for $i \in \{1, \dots, 5\}$:
    \[ Y_i = [L_i] \cup \bigcup_{j \neq i} Z_j. \]
    We computationally verify that for every $i$ we have $I(Y_i) \subsetneq \Ann(F)$, therefore no proper subscheme of $Z$ is apolar to $F$. 

    %This shows an example of a natural apoalr scheme to $F$ which is an irreduntant union of 2-jets irregular in $\deg(F)$.

    We now verify that the strategy of \Cref{prop:2jets} produces an apolar scheme that is shorter than $Z$, but not contained in it. Substituting the relation
    \[ (X_0-X_1)^2 = 2X_0^2+2X_1^2-(X_0+X_1)^2 \]
    we obtain the new GAD
    \begin{align*}
        F =&\ X_0^2 (3 X_2-6 X_3-4 X_4) + X_1^2 (2 X_2-5 X_3-4 X_4) + (X_0+X_1)^2 (-X_2+3 X_3+3 X_4) \\
        &\ + (X_0+2X_1)^2 (X_2 + X_3 + X_4).
    \end{align*} 
    The scheme evinced by this GAD has length $8$ but is not contained in $Z$.
    We can repeat the procedure with the relation
    \[ (X_0+2X_1)^2 = 2(X_0+X_1)^2-X_0^2+2X_1^2, \]
    which leads us to another GAD
    \[ F = X_0^2 (2 X_2-7 X_3-5 X_4) + X_1^2 (4 X_2-3 X_3-2 X_4) + (X_0+X_1)^2 (X_2+5 X_3+5 X_4). \]
    %We have seen that the support of $Z$ imposes only $4=\sharp (supp (Z))-1$ conditions to forms of degree 3, therefore it imposes only $3=\sharp (supp (Z))-2$ to forms of degree 2. Hence the strategy of \Cref{prop:2jets} produces $ {{5}\choose{2}}$  schemes each one apolar to $F$, made by three $2$-jets and none of theme contained in $Z$. We exibit one of them as the natural apolar scheme to the following GAD of $F$:
    The scheme evinced the last GAD is minimal among the apolar schemes to $F$: as it has length $6$ and, up to a change of variables, $F$ is the Perazzo cubic \cite{perazzo} which has cactus rank 6 (see eg. \cite[Example 2.8]{BBM14:comparison}, \cite[Section 4]{bb:differences}).
    This can also be directly verified with \cite[Algorithm~3]{BT20:Waring}.
    \exampleend
\end{example}

\begin{remark} \label{rem:Tanleqd}
    As a corollary of \Cref{prop:2jets}, a (minimal) tangential decomposition can be reached by \cite[Algorithm 2]{BT20:Waring} by only considering bases of degree up to $d$, as in the Waring case.
    In fact, \Cref{prop:2jets} provides an analogous of \cite[Proposition 3.4]{BT20:Waring} for the tangential case, which positively answers \cite[Remark 5.4]{BT20:Waring}, hence only bases in $\mathcal{B}_d$ can be considered. \\
    If we could obtain a similar result to \Cref{prop:2jets} for schemes evincing the cactus rank (\Cref{q:main}), we would have the analogous bound for \cite[Algorithm 3]{BT20:Waring}.
\end{remark}

\subsection{Apolar schemes with low length}

\begin{proposition} \label{prop:shortschemes}
Let $Z \subset \Pn$ be a $0$-dimensional scheme apolar and irredundant to $F \in \calS_d$. If $\len(Z) \leq 2d+1$, then $Z$ is $d$-regular.
\end{proposition}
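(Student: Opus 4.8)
The plan is to reduce the statement to the purely numerical claim $\HF_Z(d) = \len(Z)$ (equivalently $\dim I(Z)_d^\perp = \len(Z)$) and then argue by induction on $\len(Z)$, descending to a colength-$1$ subscheme. The base case $\len(Z) = 1$ is a single reduced point with $\HF_Z = (1,1,\dots)$, which is $0$-regular, hence $d$-regular. For the inductive step I would first record the two structural facts that make the descent possible. First, writing $Z = Z_1 \cup \dots \cup Z_s$ for the connected components and $F = \sum F_i$ for the induced decomposition with $F_i \in I(Z_i)_d^\perp$, each $Z_i$ is irredundant to $F_i$; by \Cref{rmk:BJMRlemmas} irredundancy forces $Z_i = Z_{G_i,L_i}$ for some $G_i$ of degree $\ge d$, so $Z_i$ is locally $\mathrm{Spec}\,\calRa/\Ann^{\contract}((g_i)_{L_i})$, which is Artinian \emph{Gorenstein}; moreover $\reg(Z_i) = \deg((g_i)_{L_i}) \le d$, since otherwise shrinking $Z_i$ at its (unique) socle element would produce a strictly smaller scheme that still agrees with $Z_i$ in degree $d$, hence is still apolar to $F_i$ and therefore to $F$ — contradicting irredundancy of $Z$. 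Consequently every colength-$1$ subscheme $Z'\subsetneq Z$ loses exactly one condition in degree $d$: $\HF_{Z'}(d) = \HF_Z(d) - 1$.

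Next I would pick one such $Z' = Z_1^{-}\cup Z_2 \cup \dots \cup Z_s$ (shrinking a component at a socle element) and show $Z'$ is irredundant to a \emph{generic} $G \in I(Z')_d^\perp$. The components of $Z'$ are again locally Gorenstein — the colength-$1$ reduction $Z_1^{-}$ of the Gorenstein scheme $Z_1$ is Gorenstein, with socle degree $\le \reg(Z_1) \le d$ — so there are only finitely many colength-$1$ subschemes $Z''\subsetneq Z'$, one per component, and each satisfies $\HF_{Z''}(d) = \HF_{Z'}(d) - 1$, i.e. $I(Z'')_d^\perp$ is a hyperplane of $I(Z')_d^\perp$. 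Over the infinite field $\Bbbk$ a vector space is not covered by finitely many proper subspaces, so a generic $G \in I(Z')_d^\perp$ avoids all of them and $Z'$ is irredundant to $G$. Since $\len(Z') = \len(Z) - 1 \le 2d$, the inductive hypothesis applies and gives that $Z'$ is $d$-regular, i.e. $\HF_{Z'}(d) = \len(Z')$. Combining $\HF_{Z'}(d) = \len(Z) - 1$ with $\HF_{Z'}(d) = \HF_Z(d) - 1$ yields $\HF_Z(d) = \len(Z)$, so $Z$ is $d$-regular.

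The main obstacle is the claim that a colength-$1$ subscheme $Z'$ is still irredundant to some degree-$d$ form: this is exactly where the Gorenstein structure of the components (and of their colength-$1$ reductions) and the bound on their socle degrees is essential, and also where the hypothesis $\len(Z) \le 2d+1$ is doing its work — it keeps the whole descending chain of colength-$1$ subschemes within the range in which the conclusion holds (the phenomenon of \Cref{ex:perazzo} shows that once the length exceeds this bound, $Z'$ can be irredundant yet fail to be $d$-regular, as a hidden relation among the $L_i^{d}$'s of its components survives). A careful treatment should also use \Cref{prop:externalconditions} to control which linear relations among the inverse systems $I(Z_i)_d^\perp$ can occur for an irredundant scheme, so as to make the above ``only finitely many hyperplanes'' step and the socle-degree bookkeeping completely rigorous; I expect this local commutative-algebra bookkeeping (Gorenstein colength-$1$ reductions, socle degrees, genericity over an infinite field) to be the technically delicate part, while the inductive skeleton itself is short.
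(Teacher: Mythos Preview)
Your inductive skeleton breaks at the claim that the colength-$1$ reduction $Z_1^{-}$ of a local Gorenstein component $Z_1$ is again Gorenstein. This is false. Take $Z=Z_1$ to be the length-$4$ scheme $V(Y_1^2,Y_2^2)$ supported at $[X_0]\in\bbP^2$; it is Gorenstein and irredundant to $F=X_0^{d-2}X_1X_2$ for any $d\ge 2$ (indeed $Z_1=Z_{F,X_0}$). Its unique colength-$1$ subscheme is the $2$-fat point $Z_1^{-}=V(Y_1^2,Y_1Y_2,Y_2^2)$, whose local socle $\langle y_1,y_2\rangle$ is $2$-dimensional, so $Z_1^{-}$ is not Gorenstein. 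Worse, $Z_1^{-}$ carries a full $\bbP^1$ of colength-$1$ subschemes $Z''_{[\alpha:\beta]}=V(\alpha Y_1+\beta Y_2,Y_1^2,Y_1Y_2,Y_2^2)$, and the hyperplanes
\[
I(Z''_{[\alpha:\beta]})_d^\perp=\langle X_0^d,\ \beta X_0^{d-1}X_1-\alpha X_0^{d-1}X_2\rangle\ \subset\ I(Z_1^{-})_d^\perp=\langle X_0^d,\ X_0^{d-1}X_1,\ X_0^{d-1}X_2\rangle
\]
form a pencil through $X_0^d$, hence cover all of $I(Z_1^{-})_d^\perp$. Thus $Z_1^{-}$ is redundant to \emph{every} degree-$d$ form it is apolar to, and your induction cannot continue past the first step.

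This is not a local accident: observe that your inductive step, as written, never uses the bound $\len(Z)\le 2d+1$ except to ``stay in range'' for the hypothesis. If the step were valid it would run for arbitrary length and prove that every scheme irredundant to some degree-$d$ form is $d$-regular --- contradicting \Cref{ex:perazzo}. So the numerical bound must enter the argument in an essential way, not merely as bookkeeping. A secondary issue: the identification $\reg(Z_i)=\deg\big((g_i)_{L_i}\big)$ is incorrect --- in \Cref{ex:GAD_nonContained} the local dual generator has degree $4$ while the scheme has regularity $1$. What survives (and is all you need up to that point) is the inequality $\reg(Z_i)\le d$, which follows from irredundancy exactly as you argue.

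The paper's proof is structurally different and does use $\len(Z)\le 2d+1$ essentially. Assuming $Z$ is not $d$-regular, \cite[Lemma~34]{BGI11} produces a line $L$ with $\len(Z\cap L)\ge d+2$; combined with $\len(Z)\le 2d+1$ this forces some component $Z_1$ to satisfy $\len(Z_1\cap L)>\len\big(\Res_L(Z_1)\big)$. From this one builds an explicit proper subscheme $Z'\subsetneq Z$ (replacing $Z_1$ by $Z_1\cap (m-1)H$ for a suitable hyperplane $H$) and checks $I(Z')_d^\perp=I(Z)_d^\perp$, contradicting irredundancy directly --- no induction and no genericity argument are involved.
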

\proof By contradiction, let us assume that $Z$ is not $d$-regular. Then, by \cite[Lemma~34]{BGI11}, there exists a line $L$ such that $\len (Z \cap L) \geq d+2$. Let $\Res_L(Z)$ be the residual scheme of $Z$ with respect to $L$ defined by the colon ideal $\big( I(Z) : (L) \big)$.
Since 
\[ \len(Z \cap L) + \len\big( \Res_L(Z) \big) = \len(Z) \leq 2d+1, \]
hence we have $\len\big( \Res_L(Z) \big) \leq d-1 < \len(Z \cap L)$.
Given the irreducible decomposition $Z = Z_1 + \cdots + Z_s$, there exists a component $Z_i$ such that the schematic intersection $Z_i \cap L$ satisfies 
\begin{equation} \label{eq:lenlen}
    \len(Z_i \cap L) > \len(\Res_L(Z_i)).
\end{equation}
Without loss of generality, we may assume that $i = 1$, $I(Z_1) \se \wp_{X_0}$ and $I(L) = ( X_1, \dots, X_n )$.
Let $H$ be the orthogonal hyperplane to $X_0$, i.e. $I(H) = ( X_0 )$, and let $m = \len(Z_1 \cap L)$.
We consider the scheme $Z'$ defined by
\[ I(Z') = I\left(Z_1 \cap (m-1)H \right) \cap I(Z_2) \cap \dots \cap I(Z_s). \]
It is clear that $Z' \subsetneq Z$, hence to get the desired contradiction it is sufficient to show that $Z'$ is apolar to $F$, which follows directly from the following fact by the Apolarity Lemma (\Cref{lemma:apolarity}).

\begin{claim}\label{claim:lowlength}
    $I(Z)^\perp_d = I(Z')^\perp_d$.
\end{claim}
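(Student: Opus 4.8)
The claim $I(Z)_d^\perp = I(Z')_d^\perp$ compares the two degree-$d$ inverse systems, and since $Z' \subsetneq Z$ one inclusion $I(Z')_d^\perp \subseteq I(Z)_d^\perp$ is automatic from $I(Z) \subseteq I(Z')$. So the work is to show $I(Z)_d^\perp \subseteq I(Z')_d^\perp$, i.e.\ that every degree-$d$ form apolar to $Z$ is in fact apolar to the smaller scheme $Z'$. Because $I(Z) = I(Z_1) \cap \dots \cap I(Z_s)$ and $I(Z') = I(Z_1 \cap (m-1)H) \cap I(Z_2) \cap \dots \cap I(Z_s)$, by the decomposition of orthogonal spaces into sums (as used in the proof that GAD-schemes are apolar, cf.\ \cite[Proposition 2.6]{Ger96}) it suffices to compare the local contributions at $[X_0]$: I want to show that
\[
    I(Z_1)_d^\perp \subseteq I\big(Z_1 \cap (m-1)H\big)_d^\perp + \sum_{i=2}^s I(Z_i)_d^\perp,
\]
or, more strongly, that in degree $d$ the scheme $Z_1$ and its truncation $Z_1 \cap (m-1)H$ have the same inverse system modulo the other components.

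\textbf{Key steps.} First I would set up coordinates cleanly: $Z_1$ supported at $[X_0]$, the line $L$ given by $I(L) = (X_1,\dots,X_n)$, the hyperplane $H$ given by $(X_0)$, and $m = \len(Z_1 \cap L) \geq \len(\Res_L(Z_1)) + 1$; note also $m \geq \len(Z_1 \cap L) \geq ?$ — here I must extract that $m$ is large enough. Actually the global hypothesis gives $\len(Z \cap L) \geq d+2$ and $\len(Z\cap L) + \len(\Res_L Z) \leq 2d+1$, so $\len(\Res_L Z) \leq d-1 < \len(Z\cap L)$; the averaging/pigeonhole over components then yields some $Z_i$ with $\len(Z_i \cap L) > \len(\Res_L(Z_i))$, and passing to $i=1$ I record $m = \len(Z_1\cap L)$ and $r_1 := \len(\Res_L(Z_1)) < m$. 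The second step is the local analysis: since $Z_1 \subset \Pn$ is a local $0$-dimensional scheme at $[X_0]$ and $L$ is a line through $[X_0]$, the intersection $Z_1 \cap L$ is a curvilinear subscheme of length $m$ on $L$, and $\Res_L(Z_1)$ has length $r_1$; the relation $\len(Z_1\cap L) + \len(\Res_L(Z_1)) = \len(Z_1)$ holds. The third and central step is to show that dualizing $I(Z)$ in degree $d$, any $G \in I(Z)_d^\perp$ which "uses" $Z_1$ only through a derivative of order $\geq m$ in the $L$-direction actually already vanishes — more precisely, I would argue via the description $I(Z_{F_1,X_0})_d^\perp = \bigoplus_e X_0^e \cdot \calD_{X_0}^e(\cdot)$ recalled in \Cref{ssec:lowmultiplicity_independent} that the contribution of $Z_1$ to the degree-$d$ inverse system only involves derivatives of $f_{X_0}$ of order up to $d - $ (something), and since $\len(Z_1 \cap L) \geq d+2$ forces the "$H$-truncation" $Z_1 \cap (m-1)H$ to agree with $Z_1$ at the level of degree-$d$ inverse systems. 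The key numerical input is that truncating $Z_1$ by $(m-1)H$ removes only the part of the local structure in degree $\geq m-1 \geq d+1$ along $H$, which cannot be seen in $\calS_d$.

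\textbf{Cleaner route via residuation.} Alternatively — and I think this is the right framing — I would use the standard Castelnuovo/residual exact sequence
\[
    0 \to I_{\Res_L(Z)}(d-1) \xrightarrow{\cdot L} I_Z(d) \to I_{Z \cap L, L}(d) \to 0,
\]
together with its dual. The failure of $d$-regularity of $Z$ together with $\len(\Res_L Z) \leq d-1$ means $\Res_L(Z)$ is $(d-1)$-regular (any $0$-dimensional scheme of length $\leq d-1$ is $(d-2)$-regular, hence certainly imposes independent conditions in degree $d-1$), so $h^1(I_{\Res_L Z}(d-1)) = 0$. Then, replacing $Z_1$ by $Z_1 \cap (m-1)H$ changes $Z$ to $Z'$ with $\len(Z') = \len(Z) - (\text{something} \geq 1)$ but with $\Res_L(Z') = \Res_L(Z)$ (since the removed part lies on $L$, not off it — this needs care: the truncation by $H$ versus residuation by $L$; because $L \not\subset H$ at first order, removing a chunk of $Z_1 \cap L$ does not affect $\Res_L$) and $\len(Z' \cap L) = \len(Z \cap L) - 1 \geq d+1$. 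The point is that both $Z$ and $Z'$ have $Z \cap L$ of length $\geq d+1$ on the line $L$, so in degree $d$ the restriction-to-$L$ map $I_Z(d) \to I_{Z\cap L, L}(d)$ is zero for both (a degree-$d$ form on $\PP^1$ vanishing at $\geq d+1$ points is zero), hence $I_Z(d) = L \cdot I_{\Res_L Z}(d-1)$ and likewise $I_{Z'}(d) = L \cdot I_{\Res_L Z'}(d-1) = L \cdot I_{\Res_L Z}(d-1)$, giving $I_Z(d) = I_{Z'}(d)$ and dually the claim.

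\textbf{Main obstacle.} The delicate point is verifying $\Res_L(Z') = \Res_L(Z)$ and $\len(Z' \cap L) \geq d+1$ after the $(m-1)H$-truncation — i.e.\ that truncating the component $Z_1$ by a hyperplane section in the way defined interacts correctly with residuation along the \emph{line} $L$, and that we genuinely remove at least one point of $Z_1 \cap L$ while keeping at least $d+1$ of them. Getting the colon-ideal bookkeeping right (that $I(Z_1 \cap (m-1)H) : (L) = I(\Res_L(Z_1))$, using that $Z_1$ is curvilinear along $L$ and $H$ is transverse to $L$) is where the real content lies; everything else is the standard residual exact sequence plus the "$d+1$ points on a line kill degree $d$" observation. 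I would prove \Cref{claim:lowlength} by establishing these two facts about $Z'$ and then invoking the residual sequence in degrees $d$ and $d-1$ as above.
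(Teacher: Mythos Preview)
Your endgame is correct: since $\len(Z'\cap L)\ge d+1$, every degree-$d$ form vanishing on $Z'$ restricts to zero on $L$, just as for $Z$, and this is what ultimately forces $I(Z)_d^\perp=I(Z')_d^\perp$. But the bridge you build to get there is broken. The Castelnuovo residual sequence
\[
0\longrightarrow \calI_{\Res_H(Z)}(d-1)\xrightarrow{\ \cdot h\ } \calI_Z(d)\longrightarrow \calI_{Z\cap H,H}(d)\longrightarrow 0
\]
exists only for a \emph{hyperplane} $H=V(h)$: the first map is multiplication by a single linear form and the twist is by one. Here $L$ is a \emph{line} in $\Pn$ (with $n$ arbitrary), so there is no map ``$\cdot L$'', no degree-one shift, and the formula ``$I_Z(d)=L\cdot I_{\Res_L Z}(d-1)$'' is not meaningful. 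Consequently your ``main obstacle'' $\Res_L(Z')=\Res_L(Z)$, even if true, has nothing to plug into.

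The paper avoids any residual sequence and instead proves the ideal factorization $I(Z)=I(Z')\cap I(Z\cap L)$ directly. The content is local at $Z_1$: the hypothesis $m>\len\!\big(\Res_L(Z_1)\big)$ is used to show $I\big((m{-}1)H\big)\cap I(L)\subseteq I(Z_1)$, which yields
\[
I(Z_1)=\bigl(I(Z_1)+I((m{-}1)H)\bigr)\,\cap\,\bigl(I(Z_1)+I(L)\bigr),
\]
and a Hilbert-function squeeze identifies the first factor with the saturated ideal $I\!\big(Z_1\cap(m{-}1)H\big)$ (this is also where the colength-one statement, hence your $\len(Z'\cap L)=\len(Z\cap L)-1$, is actually established rather than assumed). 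Intersecting over all components gives $I(Z)=I(Z')\cap I(Z\cap L)$; dualizing and using $I(Z\cap L)_d^\perp=I(Z'\cap L)_d^\perp\subseteq I(Z')_d^\perp$ (both are length $\ge d{+}1$ subschemes of the rational normal curve $\nu_d(L)\subset\bbP^d$) finishes the claim. So the gap in your proposal is not the diagnosis but the mechanism: replace the nonexistent line-residual sequence with this explicit ideal intersection.
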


\emph{Proof of \Cref{claim:lowlength}.}

Since $m > \len\big(\Res(Z_1)\big)$ by \cref{eq:lenlen}, we have $( X_0^{m-1} ) \cap ( X_1,\ldots,X_n ) \se I(Z_1)$, hence
\[ I(Z_1) = \big( I(Z_1) + ( X_0^{m-1} ) \big) \cap \big( I(Z_1)+ ( X_1,\ldots,X_m ) \big). \]
$\bullet$ We prove that $I(Z_1) + ( X_0^{m-1} )$ equals the saturated ideal $I\big(Z_1 \cap (m-1)H \big)$.

There are obvious ideal inclusions:
\begin{equation} \label{eq:inclusions}
I(Z_1) \se I(Z_1) + ( X_0^{m-1} ) \se I\big(Z_1 \cap (m-1)H\big).
\end{equation}
It is enough to show that the last two ideals have the same Hilbert function.
Since $Z_1 \cap (m-1)H$ has colength $1$ inside $Z_1$ and their homogeneous defining ideals agree up to degree $m-2$, we deduce that 
\[\HF_{Z_1 \cap (m-1)H}(i) = \begin{cases}
        \HF_{Z_1}(i) & \text{for $i \leq m-2$,} \\
        \HF_{Z_1}(i)-1  & \text{for $i \geq m-1$.}
    \end{cases}\]
By \cref{eq:inclusions} the Hilbert function $\HF_*$ of $\calS/\big( I(Z_1) + ( X_0^{m-1} ) \big)$ is squeezed: $\HF_{Z_1 \cap (m-1)H} \leq \HF_* \leq \HF_{Z_1}$.
However, for every $k \geq m-1$ we have $X_0^{k} \in \left( I(Z_1) + ( X_0^{m-1} ) \right) \setminus I(Z_1)$, thus $\HF_*(k) < \HF_{Z_1}(k)$ for every $k \geq m-1$.
This implies that $\HF_*$ completely agrees with $\HF_{Z_1 \cap (m-1)H}$.

$\bullet$ For every $i \in \{2,\ldots,s\}$, we trivially have
\[
          I(Z_i) = I(Z_i) \cap \big( I(Z_i) + ( X_1,\ldots,X_n ) \big).
\]
Hence, we can write:
\begin{align*}
    I(Z) &= I\big( Z_1 \cap (m-1)H \big) \cap \big( I(Z_1)+ ( X_1,\ldots,X_m ) \big) \cap \bigcap_{i = 2}^s \big( I(Z_i) + ( X_1,\ldots,X_n ) \big) \cap I(Z_i) \\
    &= I(Z') \cap \left( \bigcap_{i=1}^s I(Z_i) + ( X_1,\ldots,X_n ) \right) = I(Z') \cap I(Z \cap L).
\end{align*}
$\bullet$ From the non-degeneracy of the apolar action we get
    \[
    I(Z)_d^\perp = [I(Z') \cap I(Z\cap L)]_d^\perp = I(Z')_d^\perp + I(Z \cap L)_d^\perp \]
    but $I(Z \cap L)_d = I(Z' \cap L)_d$ because they define schemes of length $d+1$ on the same normal curve $\nu_d(L) \subset \bbP^d$. Thus, we conclude
    \[
    I(Z)_d^\perp = I(Z')_d^\perp + I(Z' \cap L)_d^\perp =  I(Z')_d^\perp,
    \]
    which proves the claim and then concludes the proof. \endproof

We notice that \Cref{prop:shortschemes} provides a good criterion for proving that the minimal apolar schemes to a \emph{given} $F \in \calS_d$ is $d$-regular, by exhibiting at least one scheme $Z$ apolar to $F$ and of length not bigger than $2d+1$.

\begin{example}
 Let $F \in \calS_4$ be the polynomial considered in \Cref{irregular:irredundant}.
 We consider another GAD $F = X_0 \tilde G_1 + X_1 \tilde G_2$, where
 \begin{align*}
   \tilde G_1 &= 10 X_0^3 + X_0^2 X_1 + 4 X_0^2 X_2 - 4 X_0 X_1^2 - 8 X_0 X_1 X_2 - 3 X_0 X_2^2 - 4 X_2^3 \in \calS_3, \\
   \tilde G_2 &= X_0 X_1^2 - 5 X_1^3 - 7 X_1^2 X_2 + 6 X_1 X_2^2 - X_2^3 \in \calS_3.
\end{align*}
 This GAD evinces the scheme $\tilde Z$ defined by 
 \[ I(\tilde Z) = \left( Y_0^2 Y_1 Y_2 - \frac{2}{3} Y_0 Y_2^3,
    Y_0 Y_1 Y_2^2,
    Y_2^4,
    Y_0 Y_1^2 - \frac{5}{2} Y_0 Y_1 Y_2 + Y_2^3 \right). \]
 Its Hilbert function is $\HF_Z = (1,3,6,9,9,\dots)$. 
 Since $\len(Z) = 9 \leq 2 \cdot 4 + 1$, by \Cref{prop:shortschemes} we can guarantee that minimal schemes apolar to such a $F$ are $4$-regular, even without computing them.
\end{example}

%% ------------------------------------------------

\section{Conclusion} \label{sec:Conclusion}

%% ------------------------------------------------

In the present work, we investigated the $d$-regularity of certain families of schemes apolar to $F \in \calS_d$.
In all the examples we presented, the schemes of minimal lengths were $d$-regular, so it is natural to ask whether this is always the case.
\begin{question}\label{q:main}
    Let $F \in \calS_d$ and $Z$ be a $0$-dimensional scheme evincing its cactus rank. Is $Z$ $d$-regular?
\end{question}

Actually, a careful reader would have noticed that none of the examples we considered really required to reach degree $d$ for regularity, instead $d-1$ was sufficient.
The most general way to phrase our question is the following.

\begin{question}\label{q:main+}
    Is there a sharp uniform upper bound (independent of the number of variables) for the regularity of schemes evincing the cactus rank of a degree-$d$ form?
\end{question}

To the best of our knowledge, we do not know the answer to \Cref{q:main,} and \Cref{q:main+}.
We believe that our results and examples could be useful in either direction.
Our positive results restrict the identikit of a possible example providing a negative answer to \Cref{q:main} to have some component of high multiplicity.
On the other side, when trying to prove a positive answer to \Cref{q:main,} or \Cref{q:main+}, \Cref{ex:irredbutnotmin} shows that we really need to use the {\it global} assumption of minimality in terms of the cactus rank, and that cannot be relaxed with the {\it local} condition of minimality by inclusion.

\bibliographystyle{alpha}
\bibliography{new_bib.bib}

Declarations of interest : none.

\end{document}